\newcommand{\mb}{\mathbf}
\newcommand{\prox}{\text{prox}}
\newcommand{\proj}{\text{proj}}
\newcommand{\vA}{{\mathbf{A}}}
\newcommand{\vU}{{\mathbf{U}}}
\newcommand{\vV}{{\mathbf{V}}}
\newcommand{\vR}{{\mathbf{R}}}
\newcommand{\vb}{{\mathbf{b}}}
\newcommand{\vu}{{\mathbf{u}}}
\newcommand{\vx}{{\mathbf{x}}}
\newcommand{\vy}{{\mathbf{y}}}
\newcommand{\vz}{{\mathbf{z}}}
\newcommand{\PPM}{{\textsc{PPM}}}
\newcommand{\sign}{\mathbf{sign}}
\newcommand{\argmax}[1]{\underset{#1}{\mathrm{argmax}}}
\newcommand{\argmin}[1]{\underset{#1}{\mathrm{argmin}}}
\newtheorem{theorem}{Theorem}
\newtheorem{lemma}{Lemma}
\newtheorem{fact}{Fact}
\newtheorem{corollary}{Corollary}
\newtheorem{remark}{Remark}
\theoremstyle{definition}
\newtheorem{definition}{Definition}
\newtheorem{assumption}{Assumption}
\begin{document}

%

%

\title{Follow the flow: Proximal flow inspired multi-step  methods }

\author{\name Yushen Huang \email yushen.huang@stonybrook.edu \\
       \addr Department of Computer Science\\
       Stony Brook University
       \AND
       \name Yifan Sun \email yifan.sun@stonybrook.edu \\
       \addr  Department of Computer Science\\
       Stony Brook University}
\maketitle
\begin{abstract}
We investigate a family of approximate multi-step proximal point methods, accelerated by implicit linear discretizations of gradient flow. The resulting methods are multi-step proximal point methods, with  similar computational cost in each update as the proximal point method. We explore several optimization methods where applying an approximate  multistep  proximal points method results in improved convergence behavior. We argue that this is the result of the lowering of truncation error in approximating gradient flow. 
\end{abstract}
\section{Introduction}
In this paper, we consider the following optimization problem:
\begin{equation}\label{eq:obj}
    \min_{\vx \in \mathbb{R}^n} \; F(\vx) = f(\vx) + h(\vx)  
\end{equation}
where $f(\vx)$ is an $L$- smooth function, $h(\vx)$ is a closed convex but not neccessary smooth function and $F(\vx)$ is bounded below. 
The problem with the following settings has been raised in many applications ~\cite{tibshirani1996regression,yuan2006model,evgeniou2005learning,candes2012exact,friedman2008sparse}. 
In this paper, we consider a family of  multi-step proximal point updates.
The algorithm is a generalization of the proximal point method (PPM)~\cite{moreau1965proximite} where we use a linear combination of the previous $\tau$ steps in each iteration, as follows:
\begin{equation}
\tilde \vx^{(k)} = \sum_{i=1}^\tau \xi_i \vx^{(k-\tau+i)}, \qquad 
    \vx^{(k+1)}  = 
    \mathcal F( \tilde{\vx}^{(k)}) 
    \label{eq:main}
\end{equation}
Here, $\mathcal F$ is an approximate proximal point step. When $\tau = 1$ and $\xi_1 = 1$,  \eqref{eq:main} reduces to the ``vanilla" approximate proximal point method, of which there are many works \cite{moreau1965proximite,asi2019stochastic,nesterov2021inexact,asi2020minibatch}.
In this paper, we investigate improvements garnered by higher order $\tau > 1$. Note that unlike nonlinear explicit discretization methods (like Runge-Kutta), there is very little overhead in increasing $\tau$, as the averaging is done in an online manner. 

However, there are two questions that could arise naturally. 
First,  suppose we are given $\tau$; how do we choose $\xi_i$ optimally? Second, can  increasing $\tau$ always improve performance? 

For the first question, we link the multi-step proximal methods to the discretization of gradient flow. Using dynamical systems to interpret optimization methods has garnered considerable interest \cite{su2015differential,shi2019acceleration,zhang2018direct,romero2020finite}. 
To optimize the performance in practice, the coefficient should be chosen to minimize the truncation error, which leads to the so-called backward differential formula (BDF) Scheme. 
For the second question,
%
%
we show that in several important cases,
the methods give significantly better results, such as   proximal gradient in compressed sensing with both convex and nonconvex penalties,  alternating projections over  linear subspaces, and alternating minimization for matrix factorization. 
\subsection{Related Work}

\paragraph{Proximal point method} The proximal point method is originally from~\cite{moreau1965proximite} where one mimimizes the following  subproblem at each iteration; 
\[
\min_{\vy} F(\vy) + \frac{1}{2\beta} \Vert \vy - \vx \Vert_2^2.
\]
When $F$ is convex, this subproblem has a larger strong convexity parameter thereby facilitating faster numerical methods. In practice,   the subproblem will be solved approximately at each iteration; for example, using stochastic projected subgradient~\citep{davis2019proximally,asi2019stochastic}, prox-linear algorithm~\citep{drusvyatskiy2018error} and catalyst generic acceleration schema ~\citep{lin2015universal}. The proximal point method can also be generalized~\citep{nesterov2021inexact} by changing the penalty norm to $\Vert \cdot \Vert_2^{p+1}$ with $p \geq 1$ which have have faster convergence rates than the vanilla proximal point methods.

\paragraph{Dynamical systems inspired methods}  The idea behind the approach is that minimizing a function is equivalent to find the stationary point of a dynamical system. For example, \cite{su2014differential} analyzed Nesterov accelerated gradient descent method   as the discretization of a second order ordinary differential equation, and \cite{shi2019acceleration} analyzed the method formed by using a higher order symplectic discretization of a related differential equations  \cite{shi2021understanding}, to provide acceleration.   Others have attempted the same using  Runge-Kutta explicit discretizations \cite{zhang2018direct}. There are also work finding an optimization methods by discretizing other flow such as rescaled gradient flow \cite{wilson2019accelerating}.
A special example of discretization-inspired optimization improvements is the extragradient method \cite{korpelevich1976extragradient}, which is widely used in min-max optimization problems \citep{du1995minimax} and variational inequality problems \citep{facchinei2003finite}. 
The connection between dynamical systems ad optimization methods are also studied in other works~\citep{schropp2000dynamical,wibisono2015accelerated,krichene2015accelerated,orecchia2018accelerated,sundaramoorthi2018variational}.



\subsection{Contributions}
In this paper, we do the following.
\begin{itemize}[leftmargin=4mm]
    \item We propose a dynamical system that, when discretized either exlicitly or implicitly, leads to gradient descent, proximal gradient method, or the proimal point method. 

    \item We propose a higher order implicit discretization scheme, based on the backward differentiation formulas, which are computationally trivial extensions of existing methods, but whose performance can be significantly improved.
    
    \item We apply our methods to several problems, such as proximal gradient over  nonconvex sparse regularization, alternating minimization and alternating projections. Numerically, we see that in these scenarios, multistep methods perform an order of magnitude better than their vanilla versions. 

    \item We give a preliminary convergence analysis on smooth quadratic problems, as well as convex, strongly convex, and nonconvex smooth problems.

\end{itemize}
\section{Preliminary}
\subsection{Function conditions}

\begin{definition}
A function $f:\mathbb{R}^n\to \mathbb{R}$ is \emph{smooth} if for all $x$, $\nabla f(x)$ exists. 
It is additionally \emph{$L$-smooth} if its gradient is $L$-Lipschitz:
\[
\|\nabla f(x) - \nabla f(y)\|_2\leq L\|x-y\|_2, \quad \forall x,y.
\]
\end{definition}
An example of an application whose objective is smooth but not $L$-smooth is matrix factorization.
We now give a (relaxed) definition of convexity.
\begin{definition}
 $f$ is 
 \emph{$\mu$-convex at $\vx$}, if
    \[
f(\vy) \geq f(\vx) + \langle \nabla f(\vx),\vy-\vx\rangle + \frac{\mu}{2}\|\vy-\vx\|^2. \tag{$\mu$-convex} 
\]
for any $\vy$. 
If $f$ is $\mu$-convex for all points $\vx$, we say that $f$ is $\mu$-convex.
\end{definition}
Specifically, $\mu \geq 0$ implies $f$ is convex, and if $\mu < 0$ then the function may be nonconvex. 
Note that if $f$ is $L$-smooth, it is also $-L$-convex. 
However, the condition of  $\mu$-convex with a negative $\mu$ is more general; for example, the function $f(\vx) = \vx^4$  is not $L$-smooth, but is convex ($\mu = 0$).

\subsection{Dynamical System Approach for Optimization}
%
Consider \eqref{eq:obj} where $f(\vx)$ is a $L$-smooth and $h(\vx)$ is a convex function. 
The local minimum of $F(\vx)$ is a stationary point of the following differential equation (DE), which we term the \emph{proximal flow}
%
%
%
\begin{equation}\label{eq:prox_flow}
    \dot{\vx}(t) = \lim_{\beta \to 0} \frac{\prox_{\beta F}(\vx) - \vx}{\beta}.
\end{equation}
%
%
%
%
This DE has a close connection with the Moreau envelope of $F$
\[
F_{\beta}(\vx) = \inf_u\; (F(u) + \frac{1}{2\beta}\|x-u\|_2^2)
\]
whose gradient is 
\[
\nabla g_{\beta}(\vx) = \frac{1}{\beta} \left ( \vx - \prox_{\beta F} (\vx) \right).
\]
Note that this gradient exists and is unique for all  $0<\beta < \frac{L}{2}$.
Also, when $F$ is smooth, \eqref{eq:prox_flow} reduces to vanilla gradient flow
\[
\dot \vx(t) = -\nabla F(\vx(t))
\]

\subsection{Existance and uniqueness} 
To show the existance and uniqueness of \eqref{eq:prox_flow}, we will first show that its right-hand-side always exists, and that it is a special instance of subgradient flow.

\begin{lemma} 
Consider $F = f + h$ where $f$ is $L$-smooth and $h$ is convex. Then  the right-hand-side of \eqref{eq:prox_flow} always exists, and satisfies
\begin{equation}
\dot \vx(t) \in - \partial F(\vx(t))
\label{eq:subgradflow}
\end{equation}
where $\partial F(\vx(t))$ is the Clarke subdifferential of $F$ at $\vx(t)$ \citep{clarke1990optimization}
\[
\partial F(\vx):=\mathbf{conv}\left(\{g : \exists \vu_i  \to \vx, \nabla F(\vu_i)\to g\}\right)
\]
and is always a closed and convex set. 
\end{lemma}

\begin{proof}
By construction, 
 $F$ is $(\mu = -L)$-convex, and thus for all $0<\beta<1/L$, $F_\beta(\vx)$ is strongly convex in $\vx$, and a unique minimizer $\vx_\beta$ always exists; moreover, 
\begin{equation}
\frac{\vx_\beta-\vx}{\beta}\in - \partial F(\vx_\beta), \quad \forall 0 < \beta < \frac{1}{L}.
\label{eq:proxsol}
\end{equation}
Next, because $F$ is composed of a smooth and convex function, all its nondifferentiable points are isolated. So, assume that for some $\vx$, $\nabla F(\vx)$ exists; then \eqref{eq:proxsol} extends to $\beta = 0$. But if $\nabla F(\vx)$ does not exist, there still exists $\bar \beta$ small enough that, for all $0<\beta<\bar \beta$, $\nabla F(\vx_\beta)$ exists. By definition of the Clarke subdifferential, the limiting gradient $\lim_{\beta\to 0} \nabla F(\vx_\beta) \in \partial F(\vx)$.
 \end{proof}
System \eqref{eq:prox_flow} is a differential inclusion. Solutions of such systems exist when $\partial F$ is an upper hemicontinuous map, and is unique when it satisfies a one-sided Lipschitz condition.

\begin{lemma} 
For $F = f+g$ where $f$ is $L$-smooth and $g$ is convex, then the negative of the Clarke subdifferential \citep{clarke1990optimization}  
\[
\partial F(\vx):=\mathbf{conv}\left(\{g : \exists \vu_i  \to \vx, \nabla F(\vu_i)\to g\}\right)
\]
is always a closed and convex set, and is upper hemicontinuous
\[
\forall \vx, \; \exists \epsilon > 0,\;  \forall \vu:\|\vu-\vx\|_2\leq \epsilon, \quad  \partial F(\vu)\subset \partial F(\vx),
\]
and satisfies the one-sided Lipschitz condition, e.g.
\begin{equation}
 \langle g_x - g_y, \vx - \vy \rangle \leq L \Vert \vy - \vx \Vert^2,
\label{eq:onesidedL}
\end{equation}
for all $g_x\in -\partial F(\vx)$, $g_y\in -\partial F(\vy)$.
\end{lemma}

\begin{proof}
By construction, $\partial F(\vx)$ is always closed and convex. 
Note that $-\partial F(\vx)$ is upper hemicontinuous if $\partial F(\vx)$ is upper hemicontinuous. To see that this is true, note that $F = f + g$ where $\nabla f(\vx)$ always exists and is continuous everywhere, and  $\nabla g(\vx)$ exists for all but isolated points; moreover, by convexity, $g$ is locally Lipschitz. 
Therefore, for all but isolated points, $\nabla g(\vx)$ is also continuous. 
Assume that $\vx$ is differentiable; then there must exist some $\epsilon$ neighborhood around $\vx$ such that for all $\vu$, $\|\vu-\vx\|_2\leq \epsilon$, $\nabla F(\vu)$ exists, and $\nabla F(\vu)\overset{\vu\to \vx}{\to} \nabla F(\vx)$. 
Now assume that $\vx$ is not differentiable. Again, there is a neighborhood $\epsilon$ where for all $\|\vu-\vx\|_2\leq \epsilon$, $\nabla F(\vu)$ exists, and by definition, $\nabla F(\vu)\overset{\vu\to\vx}{\to}\nabla F(\vx)$.

On the other hand, because for any $g_x \in -\partial F(\vx), g_y \in -\partial F(\vy)$, we have
 \begin{align*}
 F(\vy) \geq F(\vx) - \langle g_x, \vy - \vx \rangle - \frac{L}{2} \Vert \vy - \vx \Vert^2 \\
  F(\vx) \geq F(\vy) - \langle g_y, \vx - \vy \rangle - \frac{L}{2} \Vert \vy - \vx \Vert^2  
 \end{align*}
 By combining the above equations, we recover \eqref{eq:onesidedL}.
As a result, the Clarke subdifferential is one sided Lipschitz.
\end{proof}
\begin{theorem}
    The proximal flow defined by \eqref{eq:prox_flow} always admits an existing and unique solution $\theta(t)$.
\end{theorem}
\begin{proof}
  Since \eqref{eq:prox_flow} is an instance of subgradient flow \eqref{eq:subgradflow}, and since the Clarke subdifferential is  upper hemicontinuous, then \eqref{eq:prox_flow} always admits an existing solution.
  Additionally, $-\partial F(\vx)$ satisfies the one-sided Lipschitz condition, and therefore the solution is unique.
\end{proof}

\subsection{Optimality}
Next, we show that the stationary point of the proximal flow (e.g. $\lim_{t \to \infty }\vx(t)$) is equivalent to find the stationary point of $F(\vx)$.

\begin{theorem}\label{the:equiv}
Consider the proximal flow in \eqref{eq:prox_flow} with $\vx(0) = \vx_0$.
Then $\lim_{t \to \infty} \vx(t)$ converges to stationary point of $F(\vx)$.
\end{theorem}
\begin{proof}
Recall that if $\dot\vx(t) = 0$, this implies
\[
 \lim_{\beta \to 0} \frac{\prox_{\beta F}(\vx) - \vx}{\beta} = 0 
\]
which is equivalent to 
\[
 0 = \lim_{\beta \to 0} -\nabla F_{\beta}(\vx) \in \lim_{\beta \to 0} \partial F(\prox_{\beta F}(\vx))  = \partial F(\vx).
\]
Hence $0$ is a subgradient of $\partial F(\vx)$ and thus $\vx$ is a stationary point of $F$.
\end{proof}
%
To give a more intuitive understanding of the proximal flow, we give the several examples of $F(\vx)$ and demostrate how to calculate its corresponding proximal flow.

\paragraph{Example: Smooth minimization.} Suppose $F(x) = f(x)$ is differentiable everywhere. Then, 
\[
\lim_{\beta \to 0} \frac{\prox_{\beta f}(\vx) - \vx}{\beta} =  -\nabla f(\vx)
\]
and the proximal flow reduces to gradient flow
\begin{equation}\label{eq:gradient_flow}
\dot{\vx}(t) = -\nabla f(\vx).
\end{equation}

\paragraph{Example: Smooth + nonsmooth.} 
Now suppose that $f$ is not only smooth, but is second-order continuous, and
where $h$ is nonsmooth but convex. Then, in the limit of $\beta \to 0$, we may use the Taylor approximation for $\vu \approx \vx$
\[
f(\vu) \approx   \nabla f(\vx)^T(\vu-\vx) 
\]
and therefore, in this limit,
\begin{multline*}
 \prox_{\beta f}(\vx) 
\\ =\argmin{\vu} \; \beta \nabla f(\vx)^T(\vu-\vx) + \beta h(\vu) + \frac{1}{2}\|\vx-\vu\|_2^2\\
 = \prox_{\beta h}(\vx-\beta \nabla f(\vx)).
\end{multline*}
In other words, in the limit of $\beta \to 0$, the proximal gradient method also reduces to the proximal point flow.

\paragraph{Example: Shrinkage.} Let us  consider $h(\vx) = \|\vx\|_1$. Then 
\[
 \prox_h (x)_i =  \begin{cases}
    x_i-\sign(x_i)\beta, & |x_i| > \beta,\\
    0 & \text{ else.}
\end{cases} 
\]
and therefore, in the limit $\beta\to 0$,
\[
\dot x_i =
  \begin{cases}
    -\sign(x_i), & |x_i| \neq 0,\\
    0 & \text{ else.}
\end{cases}
\]
Note that in this limit, $\dot x_i$ is not continuous (although $x_i$ is still continuous).
And, for all convex nonsmooth $h$,  by using the proximal operator and not the subgradient, we ensure that $\dot\vx(t)$ is always attaining a unique value.


 \paragraph{Example: LSP.} Let us  consider a ($\mu = -1$)-convex loss (e.g., nonconvex) $h(\vx) = \sum_i \log(1+b^{-1}|x_i|)$. Again, this proximal flow  may be computed element-wise. For $x_i\neq 0$, 
 \[
 \dot x_i = -\frac{\partial h(\vx)}{\partial x_i} = -\frac{\sign(x_i)}{b+|x_i|}.
 \]
 Now consider $x_i = 0$. Then 
 \[
\prox_{\beta F} (\vx)_i = \argmin{u} ~ \beta \log(1+|u|/b) + u^2 
 \]
 which is monotonically increasing in $u$
 for any value of $b > 0$, $\beta > 0$, and is minimized at  $u = 0$. Therefore,
\[
\dot x_i =  \begin{cases}
   -\frac{\sign(x_i)}{b+|x_i|}, & x_i > 0,\\
    0 & \text{ else.}
\end{cases}
\]

\subsection{Discretize Proximal Flow for Optimization}
%
%
%

\paragraph{Introducing prox grad and prox point}
At this point, we have now observed three different possible discretizations of the right hand side of \eqref{eq:prox_flow}; that is, we may update as 
\[
\frac{\vx_{k+1}-\tilde \vx_k}{\alpha} = \mathcal F(\tilde \vx_k)
\]
where for $\tilde \vx_k = \vx_k$, the choice of $\mathcal F(\vx_k)$ leads to three well-known methods
\begin{align*}
-\nabla F(\vx_k), && \text{(Gradient descent)}\\
\frac{\prox_{\alpha h}(\vx - \alpha \nabla f(\vx)}{\alpha}&& \text{(Proximal gradient method)} \\
\frac{\prox_{\alpha F}(\vx_k) - \vx_k}{\alpha} && \text{(Proximal point method).}
\end{align*}
where the last case reduces to the well-known proximal point method
\begin{equation} \label{eq:discretize_flow}
  \vx_{k+1} =  \text{prox}_{\alpha F }(\tilde{\vx}_k).
\end{equation}

\paragraph{Introducing the multi-steps}
We now consider the discretization of \eqref{eq:prox_flow}, in the case that $F$ is smooth everywhere. Then, we may write its \emph{implicit discretization} as
\[
\frac{\vx_{k+1}-\tilde \vx_k}{\alpha} = -\mathcal F(\tilde \vx_{k+1})
\]
where $\mathcal F(\vx) = \frac{\prox_{\beta F}(\vx) - \vx}{\beta}$ for a constant step size $\beta$.
Higher order implicit discretizations can then be imposed simply by modifying $\tilde \vx_k$:
\[
\tilde \vx_k = \sum_{i=1}^\tau \xi_i \vx_{k-\tau+i}
\]
While there exists a variety of methods for producing such discretizations, we focus on the \emph{backwards differential formula}, which explicitly dictates the constants $\xi_i$ as presented in Table \ref{tab:bdf-const}.
%
%
%
%
%
Specifically, these choices of $\xi$ are chosen to produce  the highest order of \emph{truncation} error where the \emph{truncation error} of gradient flow is defined below:
\begin{definition}
Let $\vx(t)$ be the solution of 
 a differential equation 
 \[
 \vx(t) = -\mathcal F(\vx(t))
 \]
 with $\vx(0) = \vx_0$. Now conisder an iterative update 
\[
\vx_{k+1} = \mathcal A(\{\vx_i\}_{i\leq k},\alpha).
\]
The (local) truncation error is defined as 
\[
\epsilon(\alpha) = \frac{\vx((k+1)\alpha) -  \mathcal A(\{\vx(i\alpha)\}_{i\leq k},\alpha)}{\alpha}.
\]
If $\lim_{\alpha \to 0}\epsilon(\alpha) = 0$, we call the iterative update is consistence.
In addition, if $\epsilon(\alpha) = \Theta(\alpha^{\tau})$, we say the iterative update $\mathcal A$ has truncation error of order $\tau$. 
\end{definition}
\begin{table}[]
\small
    \centering
\begin{tabular}{|c|c|cccc|}
\hline&&&&&\\ [-2ex]
     &  $\bar \xi$ & $\xi_1$ & $\xi_2$ & $\xi_3$ & $\xi_4$  \\ \hline
    BDF1 &1 &  1&&& \\
    BDF2 & 2/3 & -1/3 &  4/3 &&\\
    BDF3 & 6/11 & 2/11 & -9/11 &18/11 &\\
    BDF4 & 12/25& -3/25 & 16/25 & -36/25 & 48/25\\\hline
\end{tabular}
    \caption{Summary of constants for BDF methods.}
    \label{tab:bdf-const}
\end{table}
\section{Accelerated methods}

%
In this section, we empirically validate our proposed methods  by considering several optimization problems: proximal gradient with $\ell_1$ norm, proximal gradient with LSP penalty,  alternating minimization for matrix factorization, and alternating projection on linear subspaces.
For those experiments, we calculate the equation \eqref{eq:main} based on the following three approaches:
\paragraph{Multistep prox gradient} The idea of the first approach is based on approximating 
\begin{align}
\label{eq:proximal_mapping}
 \prox_{\alpha f + \alpha h}(\tilde \vx^{k}) = \argmin{\vx} \overbrace{\underbrace{\alpha f(\vx)  + \frac{1}{2}\|\vx-\tilde \vx^{(k,1)}\|_2^2}_{\text{smooth term}} + \underbrace{\alpha h(\vx)}_{\text{prox term}}}^{F_k(\vx)}.
\end{align}
This is done by first initializing $\tilde{\vx}^{k,1} = \tilde{\vx}^k$ based on left side of \eqref{eq:main} and perform $m$ iterations of the following update
\begin{align*}
\tilde \vx^{(k,j+1)} = 
\prox_{\beta \alpha h} (\tilde \vx^{(k,j)} - \alpha \beta \nabla f(\tilde \vx^{(k,j)})-\beta(\tilde \vx^{(k,j)}-\tilde \vx^{(k,1)} )).
\end{align*}
Then update $\vx^{(k+1)} = \tilde \vx^{(k,m+1)}$.

\paragraph{Multistep alternating minimization} The second approach is based on alternating minimization  \citep{shi2016primer,nesterov2012efficiency,liu2014asynchronous,fercoq2015accelerated}. 
The idea is to first interpret the (regularized) alternating minimization steps 
\begin{eqnarray*}
\vx_{1}^{(k+1)}& =& \argmin{\vx_1} \;f(\vx_1,\vx_2^{(k)}) + \frac{1}{2\alpha} \|\vx_1-\vx_1^{(k)}\|^2 \\
\vx_{2}^{(k+1)}& =& \argmin{\vx_2}\; f(\vx_1^{(k)},\vx_2 ) + \frac{1}{2\alpha} \|\vx_2-\vx_2^{(k)}\|^2\\
\end{eqnarray*}
as approximations of the  proximal point operations
\begin{multline*}
(\vx_{1}^{(k+1)} ,
\vx_{2}^{(k+1)}) \approx  \argmin{\vx_1,\vx_2} \; f(\vx_1 ,\vx_2 ) +\\ \frac{1}{2\alpha} \|\vx_1-\vx_1^{(k)}\|^2 + \frac{1}{2\alpha} \|\vx_2-\vx_2^{(k)}\|^2.
\end{multline*}
That is, the approximation is done by splitting the variable into  blocks and minimizing each block separately.

The multistep extension is then done by 
first initializing $\hat{\vx}^{(k,1)}_1,\hat{\vx}^{(k,1)}_2 = \tilde{\vx}^{(k)}$ 
and then performing 
$m$ iterations of 
\begin{align*}
                        \hat{\mathbf{x}}^{(k,j+1)}_1 &= \underset{\mathbf{x}_1}{\arg\min}f(\vx_1, \hat{\mathbf{x}}^{(k,j)}_2) 
                        + \frac{1}{2\alpha} \Vert \mathbf{x}_1 - \hat{\mathbf{x}}^{(k,j)}_1 \Vert^2 \\
                         \hat{\mathbf{x}}^{(k,j+1)}_2 &= \underset{\mathbf{x}_2}{\arg\min}f(\hat{\mathbf{x}}^{(k,j)}_1, \vx_2 )
                        + \frac{1}{2\alpha} \Vert \mathbf{x}_2 - \hat{\mathbf{x}}^{(k,j)}_2 \Vert^2.
\end{align*}
Then update $\vx^{(k+1)} = \hat \vx^{(k,m+1)}$. Although higher $m$ can often produce more precise proximal steps, in practice we find often $m = 1$ to be sufficient for acceleration.

\paragraph{Multistep alternating projections}
The third approach is to minimize the optimization
\[
 \min_{\vx} ~ \mathcal{I}_{ \mathcal{C}_1  \bigcap \mathcal{C}_2}(\vx)
\]
where $\mathcal C_1$ and $\mathcal C_2$ are closed linear subspaces. We do this by encoding 
\[
f(\vx) = 0, \quad h(\vx)
=  \left \{ \begin{array}{cc}
  0   & \vx \in \mathcal{C}_1  \bigcap \mathcal{C}_2  \\
  \infty  & \text{otherwise}
\end{array} \right.
\]
The approach is inspired by the alternating projection method \citep{von1949rings}  which performs the following updates, where   $\tilde{\vx}^{(k)}$ is constructed as the left hand side of \ref{eq:main}:
\begin{align*}
\vy^{(k+1)} &= \proj_{\mathcal{C}_1}(\tilde{\vx}^{(k)}) \\
\vx^{(k+1)} &= \proj_{\mathcal{C}_2}(\vy^{(k + 1)}).   
\end{align*}
Note that here, as in all cases, the major multistep extension is in the construction of $\tilde \vx^{(k)}$, as shown in \eqref{eq:main}. That is, the \emph{per-iteration complexity} of computing $\tilde \vx^{(k)}$ differently does \emph{not} significantly add computational overhead -- but it does significantly affect convergence results. 

\section{Numerics on accelerated methods}
We now discuss the methods producing the results in Figure \ref{fig:numerical}.
\subsection{Proximal gradient with 1-norm}
The proximal gradient with $\ell_1$ norm over compressed sensing problem is formulated as:
\begin{align*}
    \min_{\vx 
    \in \mathbb{R}^q} \frac{1}{2} \Vert \vA \vx - \vb \Vert^2 +\lambda \Vert \vx \Vert_1
\end{align*}
where $\vA \in \mathbb{R}^{p \times q}$ and $\vb \in \mathbb{R}^p$, with   $ p  \ll q$ (underdetermined system) and the $\ell_1$ norm is to make the solution of the system to be as sparse as possible. We choose $p = 100$ and $ q = 500$ and $\lambda = 0.1$. 
The entries of $\vA_{i,j},\vb_i\sim \mathcal N(0,1)$.
We use multistep proximal gradient to approximate the proximal mapping, with $m = 1$ and $m = 5$ inner iterations. 
We choose the maximum outer iteration to be 1000.
%
%
In this experiment (and many others) we see consistent improvement of   the higher-order BDF scheme. While the benefit seems small in this example, it becomes  more apparent when nonconvex regularization is used.
\subsection{Proximal gradient with LSP penalty}
The proximal gradient with LSP penalty over compressed sensing is formulated as follows:
\begin{align*}
    \min_{\vx 
    \in \mathbb{R}^q} \frac{1}{2} \Vert \vA \vx - \vb \Vert^2 +  \sum_{i=1}^q \log\left(1+\frac{|x_i|}{\lambda_i}\right)
\end{align*}
where  $\vA \in \mathbb{R}^{p \times q}$ and $\vb \in \mathbb{R}^p$. Similarly, $p \ll q$ since the system is underdetermined. 
Compared to the $\ell_1$ norm, the nonconvex LSP norm is a more aggressive sparsifier.
Usually, $\lambda_i = \lambda$ can be used to control the amount of concavity near 0, making sparsification more aggressive. In special cases, the weights $\lambda_i$ can also be varied to promote more sparsity in specific coordinates.
%

For the experiment settings, we choose $p = 100$ and $ q = 500$.
We choose $\lambda_i = 1$ for all $i$. 
For matrix $\vA$, we choose each entry of $\vA_{i,j}$ randomly by gaussian distribution with mean 0 and standard deviation 1.
For matrix $\vb$, we choose each entry $\vb_i$ by Gaussian distribution with mean 0 and standard deviation 1. 
We use  \textbf{Approach} 1 to approximate the proximal mapping with the number of inner approximations to be $m = 1$ and $m = 5$ and choose the maximum outer iteration to be 10000.
%
 Similar to $\ell_1$ norm setting, the higher-order BDF performs better than lower order BDF scheme.
\subsection{Alternating minimization for matrix factorization}
The matrix factorization problem can be formulated as the following:
\begin{align*}
    \min_{\vU,\vV}\; \frac{1}{2}\Vert \vU \vV^{T} - \vR \Vert_F^2
\end{align*}
The objective function is a classical ill-conditioned non-convex function.
For the experiment settings, we choose $\vU$ as a $100 \times 50$ matrix and $\vV$ as a $50 \times 100$ matrix. 
We choose $\vR  = \vU_{\text{true}} \vV_{\text{true}}^{T}$ where $\vU_{\text{true}}$ and $\vV_{\text{true}}$ have the same dimension as $\vU$ and $\vV$. 
We use our multistep alternating minimization method to approximate the proximal mapping with the number of inner approximations $m = 1$ and choose the maximum outer iteration to be 1000. (In the appendix, we also show a simulation for $m = 5$, whose results are incredibly similar to $m = 1$; the extra steps do not seem to offer much benefit.)
\subsection{Alternating projection over subspace}
Here, we take 
$\mathcal{C}_1$ and $\mathcal{C}_2$ to be the column space of matrix $\mathbf{C}_1 $, $\mathbf{C}_2\in \mathbb R^{30\times 20}$. Here we produce $\mathbf{C}_1$ where each element is $\sim\mathcal N(0,1)$. Then we generate $\mathbf{C}_2 = [\mb A, \mb B+\mb D]$ where $\mb A$ contains the first $20$ columns of $\mathbf C_1$, $\mb B$ contains the last 10 columns of $\mb C_1$, and  $\mb Z$ has elements $\sim\mathcal N(0,0.1)$, independently. 
\begin{figure*}[!ht]
    \centering
         \begin{subfigure}[b]{0.35\textwidth}
 \includegraphics[width=\linewidth, trim={2ex .5ex 2ex 1ex}, clip]{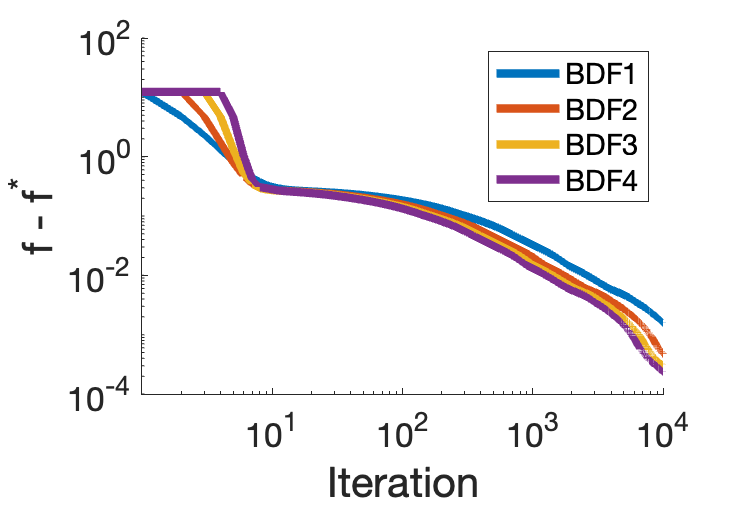}
\end{subfigure}
         \begin{subfigure}[b]{0.35\textwidth}
    \includegraphics[width=\linewidth, trim={2ex .5ex 2ex 1ex}, clip]{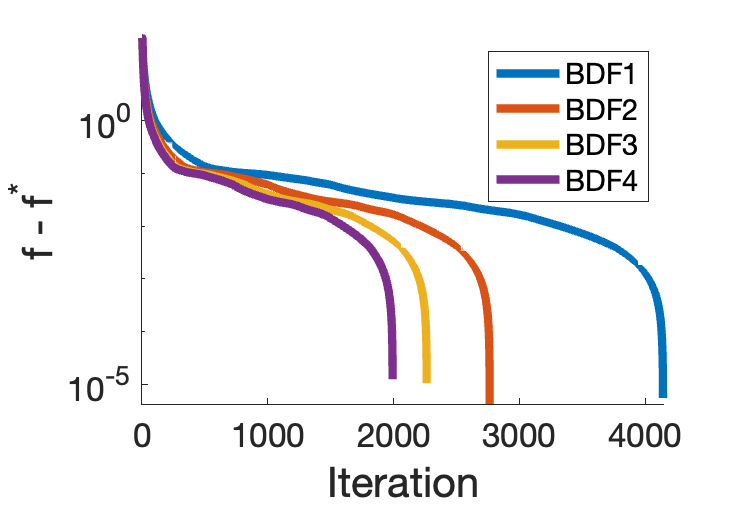}
\end{subfigure}\\
         \begin{subfigure}[b]{0.35\textwidth}    
    \includegraphics[width=\linewidth, trim={2ex .5ex 2ex 1ex}, clip]{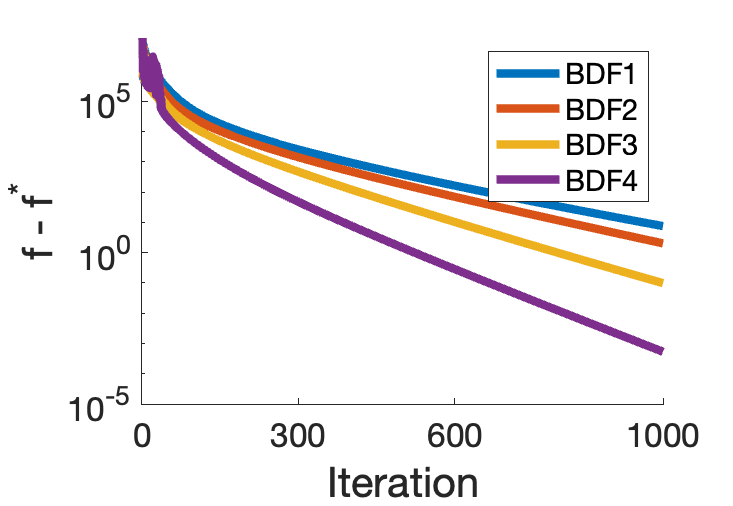}
\end{subfigure}
    \begin{subfigure}[b]{0.35\textwidth}    
    \includegraphics[width=\linewidth, trim={2ex .5ex 2ex 1ex}, clip]{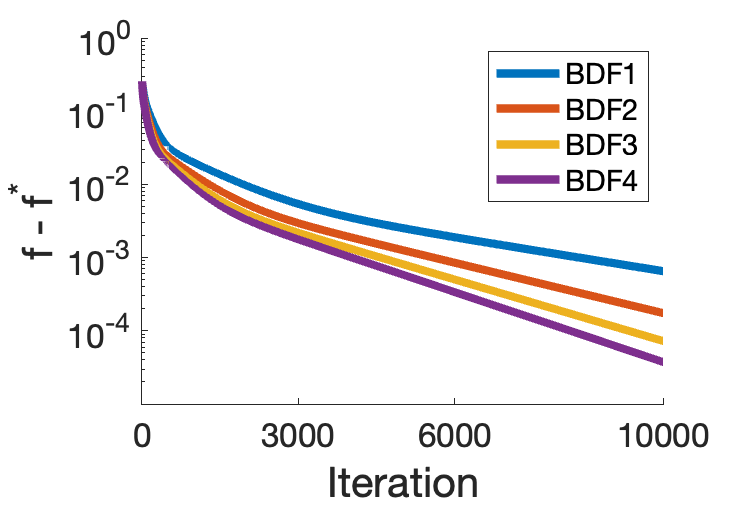}
\end{subfigure}
    \caption{Comparsion of different BDF schemes for   proximal gradient with $\ell_1$ penalty \textbf{(top left)}, 
     proximal gradient with LSP (nonconvex) penalty \textbf{(top right)}, alternating minimizations for matrix factorization  \textbf{(bottom left)},
     and alternating linear projections \textbf{(bottom right)}.
     In all cases, we use $m = 1$ inner iterations.
     }
     \label{fig:numerical}
\end{figure*}

From the above four experiments, we observe a surprising phenomenon that that the higher order methods   perform orders of magnitude better than the lower order methods, especially on problems that seem to be ill-conditioned.
\section{Analysis over smooth problems}
This section provides convergence results when $h(\vx) =0$, and we approximate the full multistep proximal point method using $m$ iterations of gradient descent. 
\subsection{Quadratic Analysis}
In this subsection, we show our convergence result by restricting $f(\vx) = \vx^T Q \vx $ and $h(\vx) = 0$ where $Q$ is a positive definite matrix with eigenvalue $\mu \leq \lambda_i \leq L$.
We define the radius of convergence of an iteration scheme $\vx^{(k)}$ as $\rho$ where $0\leq \rho\leq 1$ and
\begin{equation}
\|\vx^{(k)}-\vx^*\|_2\leq \rho^k\|\vx^{(0)}-\vx^*\|_2.
\label{eq:conv-linear-radius}
\end{equation}
We now consider an approximate multistep implicit method, 
with $m = 1$,
\begin{align*}
\vx^{(k+\tau + 1)} &= A^{m} \vx^{(k+\tau)}  + \frac{\beta}{\alpha}\sum_{j=1}^m A^{j-1} \sum_{i=1}^\tau \xi_i \vx^{(k+i)}\\
&\approx \argmin{\vx} \; \frac{1}{2} \vx^TQ\vx + \frac{1}{2\alpha } \|\vx - \sum_{i=1}^\tau \xi_i \vx^{k+i}\|_2^2
\end{align*}

where 
\begin{equation}
A =\left(1-\frac{\beta}{\alpha}\right) I-\beta Q.
\label{eq:helperA}
\end{equation}
Defining also 
\[
B:=\frac{\beta}{\alpha} \sum_{j=1}^m A^{j-1}
\]
allows a first-order view of the system
\[
\underbrace{\begin{bmatrix}
\vx^{(k+\tau+1)}\\
\vx^{(k+\tau)}\\
\vx^{(k+\tau-1)}\\
\vdots\\
\vx^{(k+2)}\\
\end{bmatrix}}_{\vz^{(k+1)}}
 \hspace{-1ex} = \hspace{-1ex}
 \setlength{\arraycolsep}{1mm}
\underbrace{\begin{bmatrix}
A^m +   \xi_{\tau}B & \xi_{\tau-1} B& \cdots &   \xi_1B\\
I&0&\cdots & 0  \\
0&I&\cdots & 0 \\
\vdots & \vdots & \ddots & \vdots  \\
0&0&\cdots  & 0 \\
\end{bmatrix}}_{=: M}
\hspace{-1ex}
\underbrace{\begin{bmatrix}
\vx^{(k+\tau)}\\
\vx^{(k+\tau-1)}\\
\vx^{(k+\tau-2)}\\
\vdots\\
\vx^{(k+1)}\\
\end{bmatrix}}_{\vz^{(k)}}
\]
and defining $\vz^* = \textbf{vec} (\vx^*,\vx^*,...,\vx^*)$
we have our usual contraction 
\[
(\vz^{(k)}-\vz^*) = M^k (\vz^{(0)}-\vz^*).
\]
Unfortunately, the 2-norm $\|M\|_2$ is bounded below by 1, making it difficult to prove contraction. However, invoking Gelfand's rule \citep{gelfand1941normierte}, we may show that
\begin{equation}
\|\vz^{(k)}-\vz^*\|_2 \lesssim (\lambda_{\max}(M))^k\|\vz^{(0)}-\vz^*\|_2.
\label{eq:contract-bdf}
\end{equation}
That is, although $M$ is not symmetric, \eqref{eq:contract-bdf} holds asymptotically.

In Tables \ref{tab:quad_sensitivity} and \ref{tab:quad_optimal}, we give the limits on step size $\beta$ and on optimal $\rho$ by finding values in which $\lambda_{\max}(M) < 1$, guaranteeing stability. Alongside, figure \ref{fig:multistep_poly_stability} shows $\rho$ over a continuation of $\beta$, over changing $m$ and $\alpha$. There is a clear correlation with greater stability and faster convergence with  increased $\alpha$ (implicit step size), as expected. The reliance on $m$ is interestingly unexpected; more $m$ leads to better solving of the inner prox step, but gives little effect on stability and can even give a negative effect on convergence. This suggests that though implicit methods are thought of as too expensive, very approximate versions are not only practical, they are very close to optimal. 

\begin{table}[ht]
    \centering
    \begin{tabular}{l|ccc}
&$L = 2$&$L = 10$&$L = 100$\\\hline
     \PPM (1)&0.667 & 0.182 & 0.0198 \\
     \PPM (10)&0.952 & 0.198 & 0.0200\\
     BDF2 (1)&0.665 & 0.181 & 0.0200   \\
     BDF2 (10)& 0.940 & 0.197 & 0.0200 \\
     BDF3 (1)&0.608 & 0.178 & 0.0200 \\
     BDF3 (10)&0.940 & 0.197 & 0.0200\\
\end{tabular}
    \caption{\textbf{Sensitivity of parameters.} Upper bound on best step size to avoid divergence $\beta$ for PPM and BDFs).  \PPM$(\alpha)$ means proximal point, with step size $\alpha$. (Bigger is better.) Here, we pick $m = 4$, and notice very little deviation for $m > 4$. Here, $\mu = 1$.}
    \label{tab:quad_sensitivity}
\end{table}

\begin{table}[ht]
    \centering
    \begin{tabular}{l|ccc}
&$L = 2$&$L = 10$&$L = 100$\\\hline
     \PPM (4,1)& 0.500 & 0.596 & 0.926\\
     \PPM (20,1)& 0.500 & 0.500 & 0.724\\
     \PPM (4, 10)& 0.0935 & 0.466 & 0.923\\
     \PPM (20,10) & 0.0909 & 0.100 & 0.676\\
     BDF2 (4,1)&0.326 & 0.282 & 0.905   \\
     BDF2 (20,1)&0.303 & 0.211 & 0.457   \\
     BDF2 (4,10)&0.059 & 0.423 & 0.941 \\
     BDF2 (20,10)&0.024 & 0.024 & 0.737 \\
     BDF3 (4,1)& 0.377 & 0.451 & 0.923\\
     BDF3 (20,1)&0.377 & 0.306 & 0.470 \\
     BDF3 (4,10)&0.197 & 0.459 & 0.943  \\
     BDF3 (20,10)&0.197 & 0.165 & 0.739  \\
\end{tabular}
    \caption{\textbf{Optimal $\rho$.} Minimum $\rho$ given best best step size, $\beta$ for PPM and BDFs). PPM$(m,\alpha)$ means proximal point, with step size $\alpha$ over $m$ iterations. (Smaller is better.) Here, $\mu = 1$.}
    \label{tab:quad_optimal}
\end{table}
\begin{figure*}
    \centering
    \includegraphics[width=.7\linewidth]{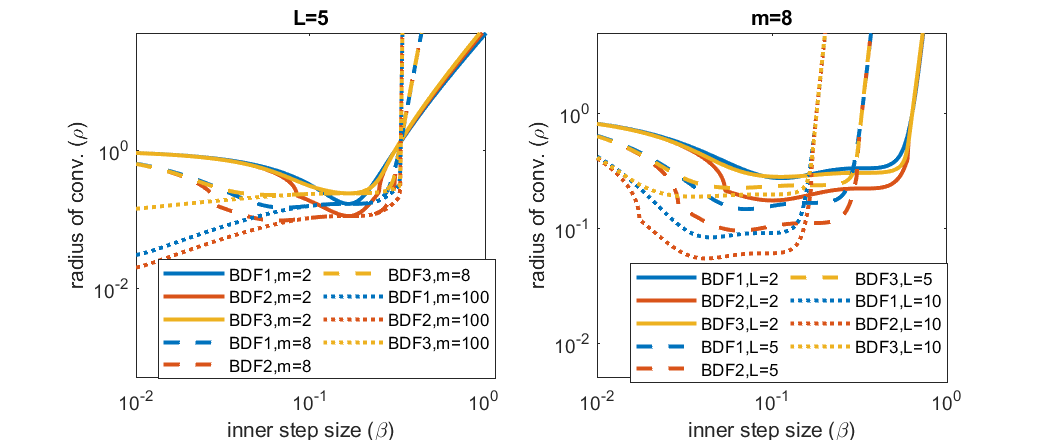}
    \caption{\textbf{Multistep radius of convergence.} $\lambda_{\max}(M)$ over changing values of $\beta$, for $\alpha = 1$ and varying $L = \lambda_{\max}(Q)$ and $m$ = number of inner gradient steps.}
    \label{fig:multistep_poly_stability}
\end{figure*}
\subsection{Convergence Result for Smooth Case}
This subsection provides convergence results for general $f(\vx)$. The first convergence result is when $\mathcal F(\tilde{\vx}_k) = \prox_{\alpha f}(\tilde{\vx}_k)$
\begin{theorem}
\label{th:exactbdm}
Assume $f$ is $\mu$-convex and differentiable everywhere.
The method by equation \ref{eq:main} converges at the following rates:

\begin{itemize}
    \item When $\mu > 0$ ($f$ is strongly convex),
    \[
\|\vx^{(k+\tau+1)}-\vx^*\|\leq C_0 \left (\frac{1}{1+\alpha \mu} \right )^{[\frac{k+\tau+1}{\tau}]}  
\]
where $C_0 = \max_{j \in \{ 0,1,\cdots \tau-1 \}} \Vert \vx^{(j)} - \vx^* \Vert.$

\item For general $\mu \leq 0$ and $\delta < 
\frac{1}{\tau-1}$,
\begin{equation}
    \min_{0\leq s \leq k } \Vert \nabla f(\vx^{(k+\tau+1)}) \Vert^2  \leq \frac{C_1}{\alpha k} + \frac{C_2 \delta}{\alpha^2k}
    \label{eq:bdmrate-general}
    \end{equation}
provided 
\[
0 \leq \alpha < \frac{2-2(\tau-1)\delta}{|\mu|}.
\]
    Here,
    \[
    C_1 = f(\vx^{(\tau)}) - f(\vx^*), \; C_2 = 2\sum_{s=0}^{\tau-1} \Vert \vx^{(s+1)}-\vx^{(s)} \Vert^2.
    \]

\item When $f$ is convex ($\mu = 0$), no upper bound restriction on $\alpha$ is needed.
\end{itemize}
\end{theorem}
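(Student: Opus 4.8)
The plan is to exploit that each ($\tau$-BDM) step is exactly one proximal evaluation $\vx^{k+\tau+1} = \prox_{\alpha f}(\vz^k)$ at the \emph{affine combination} $\vz^k := \sum_{i=1}^{\tau}\xi_i\vx^{k+i}$ (recall $\sum_i\xi_i=1$), together with the optimality identity $\nabla f(\vx^{k+\tau+1}) = \tfrac1\alpha(\vz^k-\vx^{k+\tau+1})$. The strongly convex case I would treat as a contraction argument, and the weakly convex and convex cases as a descent-lemma-plus-telescoping argument, in which the constant $\delta$ and $C_2$ arise from re-expressing $\vz^k$ through consecutive iterate differences.

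\textbf{Case $\mu>0$.} Since $\nabla f(\vx^*)=0$ we have $\vx^*=\prox_{\alpha f}(\vx^*)$, and for $\mu$-strongly convex $f$ the proximal map is a $\tfrac1{1+\alpha\mu}$-contraction. Applying this to $\vz^k$ and $\vx^*$ gives
\[
\|\vx^{k+\tau+1}-\vx^*\| \le \frac{1}{1+\alpha\mu}\Big\|\sum_{i=1}^{\tau}\xi_i(\vx^{k+i}-\vx^*)\Big\|.
\]
I would then bound the signed combination by the window maximum $\max_{1\le i\le\tau}\|\vx^{k+i}-\vx^*\|$ and propagate the bound in \emph{blocks of $\tau$ iterations}, so that one contraction factor is gained per block; this produces the exponent $[\tfrac{k+\tau+1}{\tau}]$ and the initialization constant $C_0=\max_{0\le j\le\tau-1}\|\vx^j-\vx^*\|$. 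The delicate point is that the $\xi_i$ carry mixed signs, so $\vz^k$ is not a genuine convex combination, and controlling $\|\sum_i\xi_i(\vx^{k+i}-\vx^*)\|$ by the window maximum (rather than inflating by $\sum_i|\xi_i|$) is the crux of this case.

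\textbf{Cases $\mu\le 0$ and $\mu=0$.} First I would derive a one-step \emph{implicit descent inequality}: weak convexity applied to the pair $(\vx^{k+\tau+1},\vz^k)$ together with $\vz^k-\vx^{k+\tau+1}=\alpha\nabla f(\vx^{k+\tau+1})$ gives
\[
f(\vx^{k+\tau+1}) \le f(\vz^k) - \Big(\alpha-\tfrac{|\mu|}{2}\alpha^2\Big)\|\nabla f(\vx^{k+\tau+1})\|^2.
\]
The remaining task is to replace $f(\vz^k)$ by values and gaps of actual iterates. Writing $\vz^k-\vx^{k+\tau}=-\sum_{s=1}^{\tau-1}\big(\sum_{i=1}^{s}\xi_i\big)(\vx^{k+s+1}-\vx^{k+s})$ and applying Cauchy--Schwarz over the $\tau-1$ consecutive differences produces exactly the combinatorial constant $\delta$ and a sum of squared consecutive gaps. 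I would fold these gap terms into a Lyapunov potential $\Phi_k = f(\vx^{k+\tau}) + (\text{nonnegative combination of } \|\vx^{k+i+1}-\vx^{k+i}\|^2)$ and show $\Phi_{k+1}\le\Phi_k - c\,\|\nabla f(\vx^{k+\tau+1})\|^2$ with $c>0$ precisely when $\alpha<\frac{2-2(\tau-1)\delta}{|\mu|}$ and $\delta<\frac1{\tau-1}$. Telescoping over $k$ collapses $\Phi$ to the initial data, giving $\min_s\|\nabla f\|^2\le \frac{1}{cK}(\Phi_0-f(\vx^*))$, which I would identify with $\frac{C_1}{\alpha k}+\frac{C_2\delta}{\alpha^2 k}$. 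When $\mu=0$ the penalty $\tfrac{|\mu|}{2}\alpha^2$ vanishes, so $c=\alpha>0$ for every $\alpha$ and no upper bound on the step size is required.

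\textbf{Main obstacle.} I expect the central difficulty throughout to be the \emph{mixed-sign weights} $\xi_i$, which block a direct Jensen bound on $f(\vz^k)$ and on $\|\vz^k-\vx^*\|$. The resolution is the re-expression of $\vz^k-\vx^{k+\tau}$ through consecutive differences; the bookkeeping of the resulting double sum is what yields $\delta=(\tau-1)\sum_{j=1}^{\tau-1}\big(\sum_{i=1}^{j}(\tau-i)\xi_i^2\big)$ and dictates both the admissibility condition $\delta<\frac1{\tau-1}$ and the step-size ceiling. Checking that the Lyapunov coefficients can be chosen nonnegative under these conditions is the step most likely to require care; the $\tau=1$ specialization, where $\delta=0$ and the bound reduces to the (\PPM) result of Theorem~\ref{th-ppm}, is a useful consistency check.
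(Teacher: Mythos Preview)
Your contraction step and block-propagation idea match the paper, but you are over-worrying about the mixed signs. The appendix proof of this bullet (Theorem~\ref{mu_strong_case}) simply \emph{assumes} $\xi_i\ge 0$, applies the triangle inequality to get $\|\vx^{k+\tau+1}-\vx^*\|\le\frac{1}{1+\alpha\mu}\sum_i\xi_i\|\vx^{k+i}-\vx^*\|$, and then invokes a short sublinear-sequence lemma (Lemma~\ref{le1}) to extract one factor of $\tfrac{1}{1+\alpha\mu}$ per block of $\tau$ steps. For genuinely signed $\xi_i$ the paper proves a separate, weaker bound with $\sum_i|\xi_i|$ in the numerator; it does \emph{not} manage to control $\|\sum_i\xi_i(\vx^{k+i}-\vx^*)\|$ by the window maximum without that inflation. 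So what you flag as ``the crux'' is side-stepped by hypothesis, not solved.

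\textbf{Weakly convex case: a real gap.} Your one-step inequality
\[
f(\vx^{k+\tau+1})\;\le\; f(\vz^k)-\Big(\alpha-\tfrac{|\mu|}{2}\alpha^2\Big)\|\nabla f(\vx^{k+\tau+1})\|^2
\]
is correct, but it leaves you with $f(\vz^k)$, and under $\mu$-convexity with $\mu\le 0$ you only have \emph{lower} bounds on $f$; there is no way to push $f(\vz^k)$ up to $f(\vx^{k+\tau})$ plus quadratic gap terms without an additional $L$-smoothness assumption, which this theorem does not make. Your proposed Lyapunov $\Phi_k=f(\vx^{k+\tau})+\cdots$ therefore cannot be shown to decrease from your starting inequality alone.

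The paper avoids this by applying the weak-convexity inequality directly between the \emph{actual iterates} $\vx^{k+\tau+1}$ and $\vx^{k+\tau}$, then substituting $\alpha\nabla f(\vx^{k+\tau+1})=\vz^k-\vx^{k+\tau+1}$ inside the inner product and expanding via the polarization identity $2\langle a,b\rangle=\|a\|^2+\|b\|^2-\|a-b\|^2$. This yields
\[
f(\vx^{k+\tau})-f(\vx^{k+\tau+1})\;\ge\;\alpha\|\nabla f(\vx^{k+\tau+1})\|^2+\Big(\tfrac{1}{\alpha}-\tfrac{|\mu|}{2}\Big)\|\vx^{k+\tau+1}-\vx^{k+\tau}\|^2-\tfrac{1}{\alpha}\|\vz^k-\vx^{k+\tau}\|^2,
\]
which telescopes in $f$ immediately. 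The only remaining work is to bound $\sum_k\|\vz^k-\vx^{k+\tau}\|^2$ by $\delta\sum_s\|\vx^{s+1}-\vx^s\|^2$ via consecutive differences and Fact~\ref{fact:variance}; your sketch of that part (Abel summation / Cauchy--Schwarz producing $\delta$) is on the right track and matches the paper. The step-size condition $\alpha<\frac{2-2(\tau-1)\delta}{|\mu|}$ then just enforces $\tfrac{1}{\alpha}-\tfrac{|\mu|}{2}-\tfrac{\delta}{\alpha}\ge 0$, after which a straight sum over $k$ gives the claim without any auxiliary potential.
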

\begin{proof}
The proof is in Appendix \ref{sec:app:convproofs}.    
\end{proof}
More specifically, when $\tau = 1$, the above result reduces the convergence result for the Proximal Point Method (PPM).
%
The exact multi-step proximal point methods, however, are not practical in practice, becasue the at each iteration you will need to solve an inner optimization problem which could be equally computationally expansive. In addition, even analytical solutions may have rounding error. As a result, it is neccessary to have a convergence result for approximate multi-step proximal point methods.
We now provide convergence results when the proximal point step is inexact. In this setting, we require an additional assumption.
\begin{assumption}[$\gamma$-contractive]
We say a method in  equation \eqref{eq:main} is \emph{$\gamma$-contractive} if, given $\vx$, it produces $\tilde \vx = \mathcal F(\vx)$ where
\[
\|\prox_{\alpha f}(\vx)-{\tilde \vx}\|\leq \gamma \|\prox_{\alpha f}(\vx)- \vx\|.\tag{$\gamma$C}
\]
\end{assumption}
The approximate results are given below:
\begin{theorem}
\label{th:approxbdm}
Consider $f$ everywhere differentiable and $\mu$-convex.
Suppose that a method in \ref{eq:main}  is $\gamma$-contractive. Then the Approximate Multi-Step Proximal Point converges at the following rates.

\begin{itemize}
    \item When $\mu > 0$ ($f$ is strongly convex) then for 
    \[
    \alpha < \frac{2\gamma }{\mu(1-\gamma)},
    \]
then
\[    \Vert \vx^{k+\tau+1} - \vx^* \Vert   \leq C_0\left ( \gamma +\frac{1+\gamma}{1+\alpha \mu} \right)^{[\frac{k+\tau+1}{\tau}]} 
    \]
    where $C_0 =  \max_{j \in \{ 0,1,\cdots \tau-1 \}} \Vert \vx^{j} - \vx^* \Vert$.

\item 
For general $\mu \leq 0$, when  $\delta < 
\frac{1}{2\tau-2}$, the same convergence rate 
\eqref{eq:bdmrate-general} holds, whenever
\[
\alpha < \min \left({\frac{2-2(\tau-1)\delta- 16\gamma^2}{|\mu|},\frac{1}{|\mu|}}\right).
\]
\item When $\mu = 0$, there is no upper bound on $\alpha$.
\end{itemize}

\end{theorem}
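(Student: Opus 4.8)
The plan is to begin from a reduction that is common to all three cases. Write the anchor of one multistep step as $\vy^k := \sum_{i=1}^{\tau}\xi_i\vx^{k+i}$, so that the exact ($\tau$-\BDM) update is $\prox_{\alpha f}(\vy^k)$ and the approximate iterate $\vx^{k+\tau+1}$ is the output of the ($\gamma$C) solver applied to (\ProxOp) at $\vy^k$. Setting $\ve^k:=\vx^k-\vx^*$ and using that $\vx^*$ is a fixed point of $\prox_{\alpha f}$ (since $\nabla f(\vx^*)=0$), I would split $\|\ve^{k+\tau+1}\| \le \|\vx^{k+\tau+1}-\prox_{\alpha f}(\vy^k)\| + \|\prox_{\alpha f}(\vy^k)-\vx^*\|$, bound the first term by $\gamma\|\prox_{\alpha f}(\vy^k)-\vy^k\|$ through ($\gamma$C), and then bound $\|\prox_{\alpha f}(\vy^k)-\vy^k\| \le (1+\tfrac{1}{1+\alpha\mu})\|\vy^k-\vx^*\|$ by routing through $\vx^*$ once more. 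This is the computation that collapses everything into the single factor $\rho := \gamma + \tfrac{1+\gamma}{1+\alpha\mu}$ multiplying $\|\vy^k-\vx^*\|$, and demanding that $\rho$ be a genuine contraction, $\rho<1$, is what fixes the admissible range of $\alpha$ in the strongly convex case.

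For $\mu>0$ the remaining task is to propagate $\|\ve^{k+\tau+1}\| \le \rho\,\|\sum_{i=1}^{\tau}\xi_i\,\ve^{k+i}\|$ through the recursion. A term-by-term triangle inequality is not admissible here, because the BDF weights are not a convex combination: for BDF2 and higher some $\xi_i$ are negative and $\sum_i|\xi_i|>1$, which would inflate the rate. Instead I would pass to the companion form of the recursion and invoke the root condition of the characteristic polynomial $\zeta^{\tau}-\sum_{i=1}^{\tau}\xi_i\zeta^{i-1}$, which for the stable orders in Table \ref{tab:bdf-const} has its roots in the closed unit disk (with the consistency root at $\zeta=1$ forced by $\sum_i\xi_i=1$). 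Combining this zero-stability with the contraction $\rho$ yields a decrease by one factor of $\rho$ over each window of $\tau$ indices; an induction on these windows then produces the exponent $[\tfrac{k+\tau+1}{\tau}]$ and the constant $C_0=\max_{0\le j\le\tau-1}\|\ve^j\|$ carried from the initial block.

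For general $\mu\le 0$ the contraction viewpoint is unavailable and I would argue by descent. From the exact optimality relation $\vx^{k+\tau+1}-\vy^k=-\alpha\nabla f(\vx^{k+\tau+1})$ and the $\mu$-convexity inequality applied to the strongly convex proximal objective, one obtains a one-step estimate that lower-bounds $f(\vx^{k+\tau})-f(\vx^{k+\tau+1})$ by a positive multiple of $\alpha\|\nabla f(\vx^{k+\tau+1})\|^2$ minus correction terms. Because the anchor is a combination, $\vy^k-\vx^{k+\tau}=\sum_{i}\xi_i(\vx^{k+i}-\vx^{k+\tau})$ is a weighted sum of consecutive increments; expanding $\|\vy^k-\vx^{k+\tau}\|^2$ and bounding it with the $\xi_i^2$-weights produces exactly the constant $\delta$ and aggregates the increments into $C_2$, while the requirement $\delta<\tfrac{1}{2\tau-2}$ keeps the coefficient of the gradient term strictly positive after the approximation slack is charged. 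The inner-solver error enters by bounding $\|\vx^{k+\tau+1}-\prox_{\alpha f}(\vy^k)\|\le\gamma\alpha\|\nabla f(\vx^{k+\tau+1})\|$ and applying Young's inequality; this contributes a term proportional to $\gamma^2\alpha\|\nabla f\|^2$, which is what subtracts $16\gamma^2$ from the budget $2-2(\tau-1)\delta$ and yields $\alpha<\min\{(2-2(\tau-1)\delta-16\gamma^2)/|\mu|,\,1/|\mu|\}$. Telescoping the one-step estimate over $s=0,\dots,k$ and taking the minimum gradient norm gives the $O(1/k)$ rate \eqref{eq:bdmrate-general}; when $\mu=0$ the $\mu$-dependent terms vanish, positivity is automatic, and no upper bound on $\alpha$ is required.

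The main obstacle is the simultaneous control of the multistep weights and the two error sources. In the strongly convex case the delicate point is ensuring that the companion iteration inherits the contraction factor cleanly, so that the window rate is genuinely $\rho$ rather than $\rho$ inflated by the conditioning of the companion matrix or by the $\sum_i|\xi_i|$ overhead. In the nonconvex case the delicate point is quantitative bookkeeping: one must track the multistep cross terms and the $\gamma$-contraction error together and select the Young's-inequality constants sharply enough to land on the exact thresholds $\delta<\tfrac{1}{2\tau-2}$ and the $16\gamma^2$ correction rather than on looser constants. I would verify the step-size thresholds last, once all error terms are collected, since those are the estimates most sensitive to the precise constants.
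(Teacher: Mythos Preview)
Your one-step reduction in the strongly convex case, arriving at $\|\ve^{k+\tau+1}\|\le\rho\,\|\vy^k-\vx^*\|$ with $\rho=\gamma+\tfrac{1+\gamma}{1+\alpha\mu}$, matches the paper exactly. The propagation step, however, has a genuine gap. The companion-matrix/root-condition device applies to a linear \emph{vector} recursion such as $\ve^{k+\tau+1}=\rho\sum_i\xi_i\,\ve^{k+i}$; what you actually have is the \emph{scalar} inequality $\|\ve^{k+\tau+1}\|\le\rho\,\bigl\|\sum_i\xi_i\,\ve^{k+i}\bigr\|$, and there is no companion form for that. Zero-stability of $\zeta^{\tau}-\sum_i\xi_i\zeta^{i-1}$ says nothing about how $\bigl\|\sum_i\xi_i\,\ve^{k+i}\bigr\|$ compares to $\max_i\|\ve^{k+i}\|$ when the weights have mixed sign (for BDF2 this norm can be as large as $\tfrac{5}{3}\max_i\|\ve^{k+i}\|$), so neither the exponent $[\tfrac{k+\tau+1}{\tau}]$ nor the constant $C_0$ follows from that route. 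The paper does not attempt this: it takes the triangle inequality to get the scalar recursion $a_n\le\rho\sum_i\xi_i\,a_{n-i}$ with $a_n:=\|\ve^n\|$, explicitly under the hypothesis $\xi_i\ge 0$, and then proves a short combinatorial lemma (iterated H\"older with $p=1$, $q=\infty$) showing that any such sequence satisfies $a_n\le(\rho\sum_i\xi_i)^{\lfloor n/\tau\rfloor}\max_{0\le j<\tau}a_j$. With $\sum_i\xi_i=1$ this is precisely the stated rate; for general signs the appendix records a separate version carrying the factor $\sum_i|\xi_i|$ in the base.

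For $\mu\le 0$ your outline is close in spirit but the mechanics differ from the paper in a way that matters for the constants. The paper does not bound the solver error by $\gamma\alpha\|\nabla f(\vx^{k+\tau+1})\|$ and apply Young; note that ($\gamma$C) gives $\gamma\alpha\|\nabla f(\tilde\vx^{k+\tau+1})\|$, the gradient at the \emph{exact} proximal point, so your inequality as written is not immediate. Instead the paper writes the inner product via polarization $2\langle a,b\rangle=\|a\|^2+\|b\|^2-\|a-b\|^2$, producing the residual $\|\vx^{k+\tau}-\vx^{k+\tau+1}-\alpha\nabla f(\vx^{k+\tau+1})\|^2$, inserts the exact identity $\tilde\vx^{k+\tau+1}-\vy^k+\alpha\nabla f(\tilde\vx^{k+\tau+1})=0$ inside that square, splits with $\|a+b\|^2\le 2\|a\|^2+2\|b\|^2$, and bounds the approximation piece using that $\vx\mapsto\vx+\alpha\nabla f(\vx)$ is $(1+\alpha L)$-Lipschitz before invoking the contraction. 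That $(1+\alpha L)^2$ factor, under the constraint $\alpha\le 1/|\mu|$, is what becomes the $16\gamma^2$ in the step-size bound; the cross-term treatment via $\delta$ and the telescoping you describe are essentially the same as the paper's.
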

\begin{proof}The proof is in Appendix \ref{sec:app:convproofs}.
\end{proof}
Similarly, when $\tau = 1$, the results reduce to the  result for approximate PPM. From here we can see a clear tradeoff between $\gamma$, the requirement of contraction (smaller is better) and $\alpha$, the step size for faster convergence (bigger is better). The flexibility offered by this tradeoff is an advantage of these implicit methods; in a sense, in cases where it may be tolerable to allow a computation to run for longer at the benefit of less chance of catastrophic divergence, this new family of methods may prove advantageous.

\section{Conclusion and Future Work}
The goal of this work is to investigate the use of approximate implicit discretizations of \eqref{eq:prox_flow} in badly conditioned non-smooth problem settings. In this work, we found that the higher-order approximate implicit discretization helps in many optimization problems. However, it is worth pointing out that it is also important to find an efficient manner of choosing the approximate methods. In our work, we provide multiple approaches that approximate the implicit updates, and both work well in practice. 

\newpage
\bibliography{refs.bib}

\newpage
\appendix

\onecolumn
 \section{CONVERGENCE PROOFS}
\label{sec:app:convproofs}

\begin{assumption}
$f(\vx)$ is convex and continuously differentiable:
\begin{align*}
    f(\vy) \geq f(\vx) + \langle \nabla f(\vx),\vy - \vx\rangle
\end{align*}
\end{assumption}
\begin{assumption}
$f(\vx)$ is  $\mu$- strongly convex:
\begin{align*}
    f(\vy) \geq f(\vx) + \langle\nabla f(\vx), \vy - \vx\rangle + \frac{\mu}{2}\Vert \vy - \vx \Vert^2
\end{align*}
This assumption is needed when for linear convergence.
\end{assumption}
\begin{assumption}
$f(\vx)$ is a $L$- smooth function:
\begin{equation*}
    \Vert \nabla f(\vy) - \nabla f(\vx) \Vert \leq L \Vert \vy - \vx \Vert 
\end{equation*}
\end{assumption}

In this section, we are giving the main result we have for the multistep algorithm. We first need a technical lemma:
\begin{lemma}\label{le1}
Let $\{ a_n \}_{n=0}^{\infty}$ be a $\tau$ step sublinear sequence:
\begin{equation}
    a_n \leq \sum_{i=1}^{\tau} \xi_i a_{n-i}
    \label{eq:assp:sublinear}
\end{equation}
where $\xi_i \geq 0$ and $n \geq \tau$. Then there exists $n >i_1 > i_2 > \cdots  i_{[\frac{n}{\tau}]}$ such that 
\begin{equation}
    a_n \leq  \left ( \sum_{i=1}^{n}\xi_i \right)  a_{i_1} \leq \left ( \sum_{i=1}^{n}\xi_i \right)^2  a_{i_2} \leq 
     \cdots\leq  \left ( \sum_{i=1}^{n}\xi_i \right)^{[\frac{n}{\tau}]}  a_{i_{[\frac{n}{\tau}}]}
     \label{eq:lem:sublinear}
\end{equation}
for all $k \in \{1,2,\cdots,[\frac{n}{\tau}] \}$
\end{lemma}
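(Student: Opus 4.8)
The plan is to peel off one factor of $S := \sum_{i=1}^{\tau}\xi_i$ at a time, each time replacing a nonnegatively-weighted average by its largest entry. Note first that adopting the convention $\xi_i = 0$ for $i > \tau$ makes the two sum notations agree, so the constant appearing in \eqref{eq:lem:sublinear} is simply $S$. The engine of the argument is the following single-step bound: for any index $m \geq \tau$, hypothesis \eqref{eq:assp:sublinear} together with $\xi_i \geq 0$ gives
\[
a_m \;\leq\; \sum_{i=1}^{\tau}\xi_i\, a_{m-i} \;\leq\; \Big(\sum_{i=1}^{\tau}\xi_i\Big)\max_{1\leq i\leq \tau} a_{m-i} \;=\; S\, a_{m-i^\star},
\]
where $i^\star \in \{1,\dots,\tau\}$ attains the maximum. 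Since only $\xi_i \geq 0$ is used, this requires no sign assumption on the $a_m$. The resulting index $m - i^\star$ lies in $\{m-\tau,\dots,m-1\}$, hence is strictly smaller than $m$ and no smaller than $m-\tau$.

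First I would set $i_0 := n$ and apply this single-step bound repeatedly, generating a strictly decreasing index sequence $i_0 > i_1 > i_2 > \cdots$ satisfying $i_{j-1}-\tau \leq i_j \leq i_{j-1}-1$. Because $S \geq 0$, multiplying the $j$-th bound $a_{i_{j-1}} \leq S\, a_{i_j}$ by the nonnegative factor $S^{j-1}$ preserves the inequality direction, and chaining the successive bounds yields
\[
a_n \;\leq\; S\, a_{i_1} \;\leq\; S^2 a_{i_2} \;\leq\; \cdots,
\]
which is exactly the chain asserted in \eqref{eq:lem:sublinear}, valid at each intermediate $k$.

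The main obstacle — and really the only nontrivial point — is the index bookkeeping needed to guarantee the chain reaches length $[\frac{n}{\tau}]$. The single-step bound at an index $m$ is licensed only when $m \geq \tau$, while a single application can shrink the index by as much as $\tau$, so it is not immediate that one can iterate the required number of times. I would handle this by showing inductively that $i_j \geq n - j\tau$: the base case $i_0 = n$ is immediate, and the inductive step follows from $i_j \geq i_{j-1}-\tau$. Consequently the recursion remains valid at step $j$ (which requires $i_{j-1}\geq\tau$) precisely as long as $n-(j-1)\tau \geq \tau$, i.e. $j \leq n/\tau$. Hence the process survives exactly $[\frac{n}{\tau}]$ iterations, producing the indices $i_1 > \cdots > i_{[\frac{n}{\tau}]}$.

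I would close by verifying the boundary iteration $j = [\frac{n}{\tau}]$ explicitly: the validity condition $i_{j-1}\geq\tau$ reduces to $n-([\frac{n}{\tau}]-1)\tau \geq \tau$, which holds because $n - [\frac{n}{\tau}]\tau \geq 0$ is the (nonnegative) remainder in the division of $n$ by $\tau$. This confirms that the last application of the single-step bound is legitimate and that the chain has the claimed length, completing the proof.
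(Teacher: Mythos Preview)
Your proposal is correct and follows essentially the same approach as the paper: bound $a_m$ by $S\cdot\max_i a_{m-i}$, choose the maximizing index, iterate, and use the lower bound $i_j \geq n - j\tau$ to check that $[\frac{n}{\tau}]$ iterations are legitimate. Your version is in fact slightly more careful with the index bookkeeping (you track the condition $i_{j-1}\geq\tau$ needed to apply \eqref{eq:assp:sublinear}, rather than merely $i_j\geq 0$), and you helpfully observe that the upper limit $n$ in the sums of \eqref{eq:lem:sublinear} should be read as $\tau$.
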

\begin{proof}
We prove this result by mathematical induction. When $n = \tau$, by using Holder's inequality for $p=1,q=\infty$, we will have:
\begin{align*}
    a_{\tau} \leq  \sum_{i=1}^{\tau} \xi_i a_{n-i} \leq \left (\sum_{i=1}^{\tau}\xi_i \right) \left (\max_{i\in \{1,2,\cdots,\tau \}} a_{\tau-i} \right ).
\end{align*} In this first step, we pick $i_1 = \argmax{i\in \{1,2,\cdots,\tau \}} a_{\tau-i}$. This gives
\[
a_n \leq  \left ( \sum_{i=1}^{\tau}\xi_i \right)  a_{i_1}, \qquad i_1 = \argmax{i\in \{1,2,\cdots,\tau \}} a_{\tau-i}.
\]
Now applying \eqref{eq:assp:sublinear} to $a_{i_1}$, we can further conclude
\[
a_{i_1} \leq  \left ( \sum_{i=1}^{\tau}\xi_i \right)  a_{i_2},\qquad i_2 = i_1-j, \quad j =  \argmax{i\in 1,2,...,\tau} a_{i_1-i}.
\]
This we can apply recursively, to get
\[
a_{i_{k}} \leq  \left ( \sum_{i=1}^{\tau}\xi_i \right)  a_{i_{k+1}},\qquad i_{k+1} = i_k-j, \quad j =  \argmax{i\in 1,2,...,\tau} a_{i_k-i}.
\]


This inequality can be chained for as long as there exists $i_k \geq 0$. Note that at each step, the smallest value $i_k$ can take is $n - \tau k$. So, the largest value of $k$ such that $i_k \geq 0$ is guaranteed is $k = \lceil\frac{n}{\tau}\rceil$.
\end{proof}

\begin{remark}
Lemma \ref{le1} holds for some $i_{[\frac{n}{\tau}]} \in \{0,1,2,\cdots,\tau-1\}$.
\end{remark}
Now we give the theorem which shows the linear convergence of the algorithm in $\mu$ strongly convex case:
\begin{theorem} \label{mu_strong_case}
Let $f(\vx)$ be a $\mu$-strongly convex and continuously differentiable function.  Suppose that we minimize $f(\vx)$ using a $\tau$ multi-step proximal point methods,
where we assume $\sum_{i=1}^n \xi_i = 1$ and $\xi_i \geq 0$. Then it will converge linearly
\[
\|\vx^{k+\tau+1}-\vx^*\|\leq \left (\frac{1}{1+\alpha \mu} \right )^{[\frac{k+\tau+1}{\tau}]}  \max_{j \in \{ 0,1,\cdots \tau-1 \}} \Vert \vx^{j} - \vx^* \Vert.
\]
\label{th:stronglyconvex}
\end{theorem}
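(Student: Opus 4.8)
The plan is to reduce the vector iteration to a scalar recursion in the error norm $a_n := \|\vx^n - \vx^*\|$ and then feed that recursion into Lemma~\ref{le1}. Since $\vx^*$ minimizes $f$, we have $\nabla f(\vx^*) = \vzero$, so $\vx^* = \prox_{\alpha f}(\vx^*)$ is a fixed point of the proximal operator. Writing the $\tau$-BDM update \eqref{bdf} as $\vx^{k+\tau+1} = \prox_{\alpha f}(\sum_{i=1}^\tau \xi_i \vx^{k+i})$, the whole argument rests on a single per-step contraction factor that I then chain $[\tfrac{k+\tau+1}{\tau}]$ times.

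First I would establish that the proximal operator of a $\mu$-strongly convex function contracts with factor $\tfrac{1}{1+\alpha\mu}$, that is $\|\prox_{\alpha f}(\vu)-\prox_{\alpha f}(\vv)\|\leq \tfrac{1}{1+\alpha\mu}\|\vu-\vv\|$. Setting $\vp = \prox_{\alpha f}(\vu)$ and $\vq = \prox_{\alpha f}(\vv)$, the optimality conditions give $\vu - \vp = \alpha\nabla f(\vp)$ and $\vv-\vq = \alpha\nabla f(\vq)$, hence $\vu - \vv = (\vp-\vq) + \alpha(\nabla f(\vp)-\nabla f(\vq))$. Pairing with $\vp-\vq$ and using $\mu$-strong monotonicity of $\nabla f$ (a direct consequence of $\mu$-strong convexity) yields $\langle \vu - \vv, \vp - \vq\rangle \geq (1+\alpha\mu)\|\vp-\vq\|^2$, and Cauchy--Schwarz then gives the claimed bound. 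This is the only place strong convexity enters.

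Next I would apply this contraction with $\vu = \sum_{i=1}^\tau \xi_i \vx^{k+i}$ and $\vv = \vx^*$. Because $\sum_i \xi_i = 1$ and $\xi_i \geq 0$, I can write $\vx^* = \sum_i \xi_i \vx^*$ and bound $\|\sum_i \xi_i \vx^{k+i} - \vx^*\| \leq \sum_i \xi_i \|\vx^{k+i}-\vx^*\|$ by the triangle inequality. Combining, the error sequence obeys $a_{k+\tau+1} \leq \tfrac{1}{1+\alpha\mu}\sum_{i=1}^\tau \xi_i\, a_{k+i}$, which after the substitution $n = k+\tau+1$ is exactly a $\tau$-step sublinear recursion of the form \eqref{eq:assp:sublinear} required by Lemma~\ref{le1}, but with nonnegative coefficients $\hat\xi_i = \xi_i/(1+\alpha\mu)$ summing to $\tfrac{1}{1+\alpha\mu}$ rather than to $1$. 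Invoking Lemma~\ref{le1} (whose proof uses only $\hat\xi_i \geq 0$ and carries $\sum_i \hat\xi_i$ as the contraction factor) together with its Remark gives $a_{k+\tau+1} \leq (\tfrac{1}{1+\alpha\mu})^{[\frac{k+\tau+1}{\tau}]} a_{i_\star}$ for some terminal index $i_\star \in \{0,1,\dots,\tau-1\}$, and bounding $a_{i_\star} \leq \max_{0\leq j\leq \tau-1}\|\vx^j-\vx^*\| = C_0$ closes the argument.

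The main obstacle is bookkeeping rather than analysis: I must thread the contraction factor correctly through the multistep chain. Each application of the recursion drops the index by at most $\tau$, so the chain can be iterated $[\frac{k+\tau+1}{\tau}]$ times before reaching an initial iterate, which is precisely where the exponent in the final rate comes from; the subtle point is confirming that Lemma~\ref{le1} is stated with the factor $\sum_i \hat\xi_i$ (not hard-wired to $1$) so that absorbing the prox-contraction into the coefficients is legitimate. A secondary care point is the re-indexing $n=k+\tau+1$, since the weights $\xi_i$ attach to iterates $\vx^{k+i}$ and must be matched to the $a_{n-i}$ slots; this reordering is harmless because Lemma~\ref{le1} treats the weights symmetrically through a maximum.
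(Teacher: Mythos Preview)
Your proposal is correct and follows essentially the same route as the paper: establish the single-step contraction $\|\vx^{k+\tau+1}-\vx^*\|\leq \tfrac{1}{1+\alpha\mu}\sum_{i=1}^\tau \xi_i\|\vx^{k+i}-\vx^*\|$ and then feed the scalar recursion into Lemma~\ref{le1}. The only cosmetic difference is how the per-step contraction is obtained---you argue via strong monotonicity of $\nabla f$ and Cauchy--Schwarz on $\langle \vu-\vv,\vp-\vq\rangle$, whereas the paper expands $\|\vx^{k+\tau+1}-\vx^*+\alpha\nabla f(\vx^{k+\tau+1})\|^2$ and lower-bounds each piece; both yield the identical inequality~\eqref{eq:strongconvex_1stepcontract}.
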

\begin{proof}
Recall that 

\[
  \vx_{k+\tau + 1} =  \text{prox}_{\alpha f }(\tilde{\vx}_{k+\tau}).
\]
which is equivalent to 
\begin{equation}\label{bdf}
    \vx_{k+\tau+1} = \sum_{i=1}^{\tau} \vx^{k+i} - \alpha \nabla f(\vx^{k+\tau+1})
\end{equation}
We first observe that 
\begin{align*}
 \Vert \sum_{i=1}^{\tau}\xi_i (\vx^{k+i}-\vx^*)\Vert ^2 &\overset{\eqref{bdf}}{=} \Vert \vx^{k+\tau+1} - \vx^* + \alpha \nabla f(\vx^{k+\tau+1})\Vert^2 \\
 &=\Vert \vx^{k+\tau+1} - \vx^* \Vert^2 + \alpha^2 \Vert \nabla f(\vx^{k+\tau+1}) \Vert^2  + 2\alpha \langle \nabla f(\vx^{k+\tau+1}), \vx^{k+\tau+1} - \vx^*\rangle \\
 &\overset{(a)}{ \geq} \Vert \vx^{k+\tau+1} - \vx^* \Vert^2 +\alpha^2 \mu^2 \Vert \vx^{k+\tau+1} - \vx^*\Vert^2 +2\alpha \langle \nabla f(\vx^{k+\tau+1}), \vx^{k+\tau+1} - \vx^*\rangle \\
 & \overset{(b)}{\geq} (1+\alpha^2 \mu^2) \Vert \vx^{k+\tau+1} - \vx^* \Vert^2 + 2\alpha(\mu\Vert \vx^{k+\tau+1} - \vx^* \Vert^2) \\
 & = (1+2\alpha \mu+\alpha^2 \mu^2) \Vert \vx^{k+\tau+1} - \vx^* \Vert^2
\end{align*}
where (a) and (b) are consequences of $\mu$-strong convexity.
As a result, we will have:
\begin{align}
\Vert \vx^{k+\tau+1} - \vx^* \Vert &\leq \frac{1}{1+\alpha \mu} \Vert \sum_{i=1}^{\tau}\xi_i (\vx^{k+i}-\vx^*)\Vert   \label{eq:strongconvex_1stepcontract}\\
&\leq \frac{1}{1+\alpha \mu}  \sum_{i=1}^{\tau}\xi_i \Vert \vx^{k+i}-\vx^*\Vert 
\end{align}
by convexity of the $\ell_2$ norm.
Now by \textbf{Lemma} \ref{le1}, we will have $i_1,i_2,\cdots,i_{[\frac{k+\tau+1}{\tau}]}$ such that 
\begin{align*}
    \Vert \vx^{k+\tau+1} - \vx^* \Vert &\leq  \left (\frac{1}{1+\alpha \mu} \right )  \Vert \vx^{i_1} - \vx^* \Vert \leq \left (\frac{1}{1+\alpha \mu} \right )^2  \Vert \vx^{i_2} - \vx^* \Vert \\
    & \cdots \left (\frac{1}{1+\alpha \mu} \right )^{[\frac{k+\tau+1}{\tau}]}  \Vert \vx^{i_{[\frac{k+\tau+1}{\tau}]}} - \vx^* \Vert \\
    & \leq \left (\frac{1}{1+\alpha \mu} \right )^{[\frac{k+\tau+1}{\tau}]}  \max_{j \in \{ 0,1,\cdots \tau-1 \}} \Vert \vx^{j} - \vx^* \Vert \label{eq11}
\end{align*}
for all $k \in \{1,2,\cdots,[\frac{n}{\tau}] \}$ where $m \in \{i_1,i_2,\cdots,i_{[\frac{k+\tau+1}{\tau}]} \}$
Hence, we prove the desired result.
\end{proof}
According to Theorem \ref{th:stronglyconvex}
we can have the following corollary:
\begin{corollary}
Let $f(\vx)$ be a $\mu$-strongly convex and continuously differentiable function. Then, the proximal point method
\begin{align*}
    \vx^{k+1} = \argmin{\vx} \left (f(\vx) + \frac{1}{2\alpha} \Vert \vx - \vx^{k} \Vert^2 \right )
\end{align*}
converges linearly 

\[
\|\vx^{k}-\vx^*\|\leq \left (\frac{1}{1+\alpha \mu} \right )^{k}    \Vert \vx^{0} - \vx^* \Vert
\]
with any step size $\alpha > 0$.
\end{corollary}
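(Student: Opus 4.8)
The plan is to obtain this corollary as the single-step ($\tau = 1$) specialization of Theorem~\ref{th:stronglyconvex}. First I would observe that the stated proximal-point update is exactly (\ref{bdf}) with $\tau = 1$ and $\xi_1 = 1$: the optimality condition of $\argmin{\vx}\; f(\vx) + \tfrac{1}{2\alpha}\|\vx - \vx^k\|^2$ is $\vx^{k+1} = \vx^k - \alpha\nabla f(\vx^{k+1})$, which is precisely $\vx^{k+1} = \prox_{\alpha f}(\vx^k)$ and matches the ($\tau$-BDM) recursion when the only weight $\xi_1$ equals $1$. Since then $\sum_i \xi_i = 1$ and $\xi_i \geq 0$ both hold, the hypotheses of Theorem~\ref{th:stronglyconvex} are satisfied, and I may invoke its conclusion directly.

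Specializing that conclusion, with $\tau = 1$ and writing $n = k+\tau+1 = k+2$, the exponent $[\frac{k+\tau+1}{\tau}]$ collapses to $n$, while the maximum $\max_{j \in \{0,\dots,\tau-1\}}\|\vx^j - \vx^*\|$ degenerates to the single term $\|\vx^0 - \vx^*\|$. After relabeling the iterate index, this yields exactly $\|\vx^k - \vx^*\| \leq (\tfrac{1}{1+\alpha\mu})^k \|\vx^0 - \vx^*\|$. A more transparent route, which I would present in parallel, is to note that for $\tau = 1$ the invocation of Lemma~\ref{le1} is entirely unnecessary, since the running average over past iterates reduces to one iterate: the per-step inequality \eqref{eq:strongconvex_1stepcontract} already reads $\|\vx^{k+1} - \vx^*\| \leq \tfrac{1}{1+\alpha\mu}\|\vx^k - \vx^*\|$, a genuine one-step contraction, so the claimed bound follows by iterating it $k$ times. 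The derivation of \eqref{eq:strongconvex_1stepcontract} itself reuses only strong convexity of $f$ together with $\nabla f(\vx^*) = 0$, through the two bounds $\langle \nabla f(\vx^{k+1}), \vx^{k+1}-\vx^*\rangle \geq \mu\|\vx^{k+1}-\vx^*\|^2$ and $\|\nabla f(\vx^{k+1})\| \geq \mu\|\vx^{k+1}-\vx^*\|$.

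Finally, I would verify the step-size claim: because $\mu > 0$, the contraction factor $\tfrac{1}{1+\alpha\mu}$ is strictly less than $1$ for \emph{every} $\alpha > 0$, so no upper bound on $\alpha$ is required for linear convergence. Since the whole argument is a direct specialization, I do not expect a genuine obstacle; the only points demanding care are the index bookkeeping — matching $k+\tau+1$ in the theorem to the index $k$ in the corollary and confirming that the constant $C_0$ collapses to $\|\vx^0-\vx^*\|$ — and checking that the $\tau=1$ weights satisfy the nonnegativity and normalization hypotheses, so that Theorem~\ref{th:stronglyconvex} genuinely applies rather than being merely suggestive.
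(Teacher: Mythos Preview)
Your proposal is correct and matches the paper's approach: the paper presents this corollary immediately after Theorem~\ref{th:stronglyconvex} with the remark ``According to Theorem~\ref{th:stronglyconvex} we can have the following corollary,'' giving no separate proof, so your specialization to $\tau=1$, $\xi_1=1$ is exactly the intended argument. Your additional observation that for $\tau=1$ one can bypass Lemma~\ref{le1} and simply iterate the one-step contraction \eqref{eq:strongconvex_1stepcontract} is a helpful clarification, but not a different route.
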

\begin{remark}
Although the above theorem only shows the case that $\xi_i\geq 0$
we can still have
linear convergence 
when $\xi_i<0$
by letting $\alpha \geq \frac{\sum_{i=1}^{\tau}\xi_i - 1}{\mu}$. See the following result.
\end{remark}
\begin{theorem}
Let $f(\vx)$ be a $\mu$-strongly convex and continuously differentiable function.  Suppose that we minimize $f(\vx)$ using a $\tau$ multi-step proximal point methods, \ref{bdf} 
where we assume $\sum_{i=1}^n \xi_i = 1$. Then the $\tau$ multi-step proximal point methods will converge linearly
\[
\|\vx^{k+\tau+1}-\vx^*\|\leq \left (\frac{\sum_{i=1}^{\tau} \vert \xi_i \vert }{1+\alpha \mu} \right )^{[\frac{k+\tau+1}{\tau}]}  \max_{j \in \{ 0,1,\cdots \tau-1 \}} \Vert \vx^{j} - \vx^* \Vert.
\]
\label{th:stronglyconvex-2}
\end{theorem}
\begin{proof}
Similar to \ref{mu_strong_case}, we will have:
\begin{align}
\Vert \vx^{k+\tau+1} - \vx^* \Vert &\leq \frac{1}{1+\alpha \mu} \Vert \sum_{i=1}^{\tau}\xi_i (\vx^{k+i}-\vx^*)\Vert   \\
&\leq \frac{\sum_{i=1}^{\tau} \vert \xi_i \vert}{1+\alpha \mu}  \sum_{i=1}^{\tau} \frac{\vert \xi_i  \vert}{\sum_{i=1}^{\tau} \vert \xi_i \vert} \Vert \vx^{k+i}-\vx^*\Vert 
\end{align}
 Now by \textbf{Lemma} \ref{le1} and similar to \ref{mu_strong_case}, we will have the desired result.
 
\end{proof}
\begin{remark}
Although the above result gives a linear convergence rate when the step size is large enough, the exact multi-step proximal point methods are in general  hard to solve to completion, and is usually approximated in practice. Below, we give the linear convergence rate for approximated multi-step proximal point methods.
\end{remark}
\begin{theorem}
\label{th:approx-strongly-convex}
Let $f(\vx)$ be a $\mu$-strongly convex and continuously differentiable function.  Suppose that the exact solution to
$\tau$ multi-step proximal point methods are 
\begin{align}
\tilde{\vx}^{k+\tau+1} = \sum_{i=1}^{\tau}\xi_i \vx^{k+i} - \alpha \nabla f(\tilde{\vx}^{k+\tau+1})
\end{align}
where  $\sum_{i=1}^n \xi_i = 1$ and $\xi \geq 0 $. Suppose that an approximate solution $\vx^{k+\tau +1}$ of $\tilde{\vx}^{k+\tau+1}$ satisfies a contraction :
\begin{equation}
    \Vert \vx^{k+\tau +1} - \tilde{\vx}^{k+\tau+1} \Vert  \leq \gamma     \Vert \sum_{i=1}^{\tau}\xi_i \vx^{k+i} - \tilde{\vx}^{k+\tau+1} \Vert \label{eq:approx_contract_assp}
\end{equation}
where $\gamma < 1$.Then $\vx^{k+\tau+1}$ converges to $\vx^*$ linaerly

\[    \Vert \vx^{k+\tau+1} - \vx^* \Vert   \leq \left ( \gamma +\frac{1+\gamma}{1+\alpha \mu} \right)^{[\frac{k+\tau+1}{\tau}]}  \max_{j \in \{ 0,1,\cdots \tau-1 \}} \Vert \vx^{j} - \vx^* \Vert
    \]
when $\gamma < \frac{\alpha \mu }{2+\alpha \mu}$.

\end{theorem}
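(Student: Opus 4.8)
The plan is to mirror the proof of the exact strongly convex case (Theorem \ref{mu_strong_case}), replacing its clean one-step contraction by one that carries an extra additive penalty coming from the approximation error, and then to feed the resulting recurrence into Lemma \ref{le1}.

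Write $S := \sum_{i=1}^\tau \xi_i \vx^{k+i}$, so that $\tilde{\vx}^{k+\tau+1} = \prox_{\alpha f}(S)$ is the exact BDM update. First I would split the quantity of interest by the triangle inequality as $\|\vx^{k+\tau+1} - \vx^*\| \leq \|\vx^{k+\tau+1} - \tilde{\vx}^{k+\tau+1}\| + \|\tilde{\vx}^{k+\tau+1} - \vx^*\|$. The second term is the exact update error, which the one-step estimate \eqref{eq:strongconvex_1stepcontract} bounds by $\frac{1}{1+\alpha\mu}\|S - \vx^*\|$ (using $\sum_i \xi_i = 1$ so that $\sum_i \xi_i \vx^* = \vx^*$). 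For the first term I would invoke the $\gamma$-contraction hypothesis \eqref{eq:approx_contract_assp}, which bounds it by $\gamma\|S - \tilde{\vx}^{k+\tau+1}\|$.

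The key maneuver is to control this cross term $\|S - \tilde{\vx}^{k+\tau+1}\|$. Here I would insert $\vx^*$, writing $\|S - \tilde{\vx}^{k+\tau+1}\| \leq \|S - \vx^*\| + \|\vx^* - \tilde{\vx}^{k+\tau+1}\|$ and reusing the exact bound on the second piece to get $\|S - \tilde{\vx}^{k+\tau+1}\| \leq \left(1 + \frac{1}{1+\alpha\mu}\right)\|S - \vx^*\|$. Combining the three estimates collapses everything onto $\|S - \vx^*\|$ with coefficient $\gamma\left(1 + \frac{1}{1+\alpha\mu}\right) + \frac{1}{1+\alpha\mu} = \gamma + \frac{1+\gamma}{1+\alpha\mu} =: c$. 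A short algebraic check shows $c < 1$ is equivalent to $\gamma(2+\alpha\mu) < \alpha\mu$, i.e. the stated hypothesis $\gamma < \frac{\alpha\mu}{2+\alpha\mu}$; this is precisely where that threshold comes from. I would then pass from $\|S - \vx^*\| = \|\sum_i \xi_i(\vx^{k+i} - \vx^*)\| \leq \sum_i \xi_i \|\vx^{k+i} - \vx^*\|$ by convexity of the norm (using $\xi_i \geq 0$ and $\sum_i \xi_i = 1$), producing the one-step sublinear recurrence $a_{k+\tau+1} \leq c \sum_i \xi_i a_{k+i}$ for $a_n := \|\vx^n - \vx^*\|$. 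To land in the exact hypothesis of Lemma \ref{le1}, I would absorb the factor $c$ by setting $\tilde\xi_i := c\,\xi_i$, so that $\sum_i \tilde\xi_i = c$ and $a_{k+\tau+1} \leq \sum_i \tilde\xi_i a_{k+i}$; the lemma then telescopes this into $a_{k+\tau+1} \leq c^{[\frac{k+\tau+1}{\tau}]} \max_{j \in \{0,1,\cdots,\tau-1\}} a_j$, which is exactly the claimed rate.

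The main obstacle is the cross-term step. The $\gamma$-contraction is stated relative to the \emph{anchor} $S$ rather than the solution $\vx^*$, so the approximation error does not directly yield a bound in terms of the distance to $\vx^*$. Routing it through $\vx^*$ and re-applying the exact contraction is what allows the two error sources to be merged into a single multiplicative factor, and it is exactly this coupling that produces the $1+\gamma$ in the numerator and the sharp contraction threshold $\gamma < \frac{\alpha\mu}{2+\alpha\mu}$.
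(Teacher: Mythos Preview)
Your proposal is correct and follows essentially the same route as the paper: triangle-split at $\tilde{\vx}^{k+\tau+1}$, apply the $\gamma$-contraction \eqref{eq:approx_contract_assp}, insert $\vx^*$ into $\|S-\tilde{\vx}^{k+\tau+1}\|$, bound both copies of $\|\tilde{\vx}^{k+\tau+1}-\vx^*\|$ via \eqref{eq:strongconvex_1stepcontract}, and finish with Lemma~\ref{le1}. If anything you are slightly more explicit than the paper in checking that $c<1$ is equivalent to $\gamma<\frac{\alpha\mu}{2+\alpha\mu}$ and in packaging the recurrence via $\tilde\xi_i=c\,\xi_i$ to match Lemma~\ref{le1}'s hypothesis.
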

\begin{proof}

 order seems a bit illogical, how about
\begin{eqnarray*}
    \Vert \vx^{k+\tau +1} - \vx^* \Vert & \leq&  \Vert \vx^{k+\tau +1} - \tilde{\vx}^{k+\tau+1} \Vert  + \Vert \tilde{\vx}^{k+\tau+1} -\vx^{*} \Vert \\
    &\overset{\eqref{eq:approx_contract_assp}}{\leq} & \gamma     \Vert \sum_{i=1}^{\tau}\xi_i \vx^{k+i} - \tilde{\vx}^{k+\tau+1}\Vert + \Vert \tilde{\vx}^{k+\tau+1} -\vx^{*} \Vert\\
    & \le& \gamma \left( \Vert  \sum_{i=1}^{\tau}\xi_i \vx^{k+i} - \vx^*\Vert +\Vert \tilde{\vx}^{k+\tau+1}  - \vx^*\Vert \right ) +\Vert \tilde{\vx}^{k+\tau+1} -\vx^{*} \Vert\\
    &\overset{(a)}{\leq}& (\gamma +\frac{1+\gamma}{1+\alpha \mu})\sum_{i=1}^{\tau}\xi_i \Vert \vx^{k+i}-\vx^*\Vert  \\
\end{eqnarray*}
where (a) follows from \eqref{eq:strongconvex_1stepcontract}

Now by \textbf{Lemma}\ref{le1}, we have
\begin{align*}
    \Vert \vx^{k+\tau+1} - \vx^* \Vert 
     \leq \left ( \gamma +\frac{1+\gamma}{1+\alpha \mu} \right)^{[\frac{k+\tau+1}{\tau}]}  \max_{j \in \{ 0,1,\cdots \tau-1 \}} \Vert \vx^{j} - \vx^* \Vert
\end{align*}for all $k \in \{1,2,\cdots,[\frac{n}{\tau}] \}$.
Hence, we prove the desired result.
\end{proof}
\begin{corollary}
Let $f(\vx)$ be a $\mu$-strongly convex and $L$-smooth function.
Suppose that we use a $\tau$multi-step proximal point methods with update (\ref{bdf}),where \eqref{bdf} is computed approximately
by using $n$ steps of gradient descent with step size $\beta  \leq \frac{\alpha}{\alpha L+1}$. 
This method
will converge linearly when $\alpha$ is large enough with rate

\[    \Vert \vx^{k+\tau+1} - \vx^* \Vert   \leq \left ( (1-\beta \mu -\frac{\beta}{\alpha})^n +\frac{1+(1-\beta \mu -\frac{\beta}{\alpha})^n}{1+\alpha \mu} \right)^{[\frac{k+\tau+1}{\tau}]}  \max_{j \in \{ 0,1,\cdots \tau-1 \}} \Vert \vx^{j} - \vx^* \Vert
    \]

\end{corollary}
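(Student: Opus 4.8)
The plan is to obtain this corollary as a direct instance of Theorem~\ref{th:approx-strongly-convex}, by checking that $n$ inner gradient steps realize the contraction hypothesis \eqref{eq:approx_contract_assp} with exactly $\gamma = (1-\beta\mu-\tfrac{\beta}{\alpha})^n$. First I would make the inner problem explicit. At outer iteration $k$, writing $\vy := \sum_{i=1}^{\tau}\xi_i\vx^{k+i}$ for the prox center, the exact update \eqref{bdf} is the minimizer $\tilde{\vx}^{k+\tau+1}$ of the regularized objective
\[
g(\vx) = f(\vx) + \frac{1}{2\alpha}\|\vx-\vy\|^2 .
\]
Since the proximity term adds $\tfrac{1}{\alpha}$ to both curvature bounds, $g$ is $(\mu+\tfrac{1}{\alpha})$-strongly convex and $(L+\tfrac{1}{\alpha})$-smooth, with unique minimizer $\tilde{\vx}^{k+\tau+1}$.

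Next I would run the inner gradient descent on $g$ \emph{initialized at the prox center} $\vx^{(0)}=\vy$ with step size $\beta$; this choice of initialization is what makes the resulting bound line up with the right-hand side of \eqref{eq:approx_contract_assp}. Because $\beta\le \frac{\alpha}{\alpha L+1}=\big(L+\tfrac{1}{\alpha}\big)^{-1}$ is at most the reciprocal of the smoothness constant of $g$, the standard one-step contraction for gradient descent on a strongly convex, smooth function gives a per-step factor $\max\{\,|1-\beta(\mu+\tfrac{1}{\alpha})|,\,|1-\beta(L+\tfrac{1}{\alpha})|\,\} = 1-\beta\mu-\tfrac{\beta}{\alpha}$, the second argument of the max being dominated precisely because $\beta(L+\tfrac{1}{\alpha})\le 1$. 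Iterating $n$ times from $\vx^{(0)}=\vy$ yields
\[
\big\|\vx^{k+\tau+1}-\tilde{\vx}^{k+\tau+1}\big\| \le \Big(1-\beta\mu-\tfrac{\beta}{\alpha}\Big)^{n}\Big\|\textstyle\sum_{i=1}^{\tau}\xi_i\vx^{k+i}-\tilde{\vx}^{k+\tau+1}\Big\| ,
\]
which is exactly the hypothesis \eqref{eq:approx_contract_assp} with $\gamma=(1-\beta\mu-\tfrac{\beta}{\alpha})^{n}$.

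Finally I would invoke Theorem~\ref{th:approx-strongly-convex} with this $\gamma$, which immediately produces the stated rate with per-block factor $\gamma+\tfrac{1+\gamma}{1+\alpha\mu}$. The one point requiring genuine care — and the source of the qualifier ``when $\alpha$ is large enough'' — is the admissibility condition $\gamma<\frac{\alpha\mu}{2+\alpha\mu}$ demanded by that theorem. Here I would observe that $\beta\le\frac{\alpha}{\alpha L+1}\le\frac{\alpha}{\alpha\mu+1}=(\mu+\tfrac{1}{\alpha})^{-1}$ forces $\beta(\mu+\tfrac{1}{\alpha})<1$, so $\gamma<1$ for every $n\ge 1$, while the threshold $\frac{\alpha\mu}{2+\alpha\mu}\to 1$ as $\alpha\to\infty$; hence the hypothesis is satisfied for all sufficiently large $\alpha$. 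The main obstacle is therefore not the contraction estimate itself, which is routine, but verifying that the single step-size restriction $\beta\le\frac{\alpha}{\alpha L+1}$ simultaneously collapses the gradient-descent per-step factor to the clean value $1-\beta\mu-\tfrac{\beta}{\alpha}$ and keeps the resulting $\gamma$ below the $\alpha$-dependent admissibility threshold of the parent theorem.
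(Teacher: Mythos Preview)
Your proposal is correct and follows essentially the same route as the paper: identify the inner problem $g$ as $(\mu+\tfrac{1}{\alpha})$-strongly convex, run $n$ gradient steps from the prox center to obtain the contraction \eqref{eq:approx_contract_assp} with $\gamma=(1-\beta\mu-\tfrac{\beta}{\alpha})^n$, and then invoke Theorem~\ref{th:approx-strongly-convex}. If anything, you are more explicit than the paper about why the hypothesis $\beta\le\frac{\alpha}{\alpha L+1}$ collapses the per-step contraction factor to $1-\beta\mu-\tfrac{\beta}{\alpha}$, and about how the admissibility condition $\gamma<\frac{\alpha\mu}{2+\alpha\mu}$ is eventually met as $\alpha$ grows.
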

\begin{proof}
Recall that for a $\rho$-strongly convex function $g$, then $n$ steps of gradient descent with appropriate step size $\beta$ gives convergence guarantee
\[
\|\vx^n - \vx^*\| \leq (1-\rho\beta)^n\|\vx^0-\vx^*\|.
\]
Now, note that 
\[
g(\vx) =  \frac{1}{2\alpha}\Vert \vx -\sum_{i=1}^{\tau}\xi_i \vx^{k+i}\Vert^2+ f(\vx)
\]
 is $\rho = \mu + \frac{1}{\alpha}$ strongly convex function. Then, using $n$ steps of gradient descent with starting 
 point $\sum_{i=1}^{\tau}\xi_i \vx^{k+i}$ 
 gives
\begin{align*}
    \Vert \vx^{k+\tau +1}_{n} - \tilde{\vx}^{k+\tau+1}\Vert  \leq (1-\beta \mu -\frac{\beta}{\alpha})^n \Vert \sum_{i=1}^{\tau}\xi_i \vx^{k+i} - \tilde{\vx}^{k+\tau+1}\Vert 
\end{align*}
where $\tilde{\vx}^{k+\tau+1}$ fully minimizes $g$.
As a result, \eqref{eq:approx_contract_assp} is satisfied with $\gamma = (1-\beta \mu -\frac{\beta}{\alpha})^n$, and by invoking Theorem \ref{th:approx-strongly-convex}, we will have the desired result. 
\end{proof}
The following two theorems are for the case when $f(\vx)$ is $L-$ weakly convex for both exact and inexact case. We will also need the following simple fact:
\begin{fact}
\[
\|\sum_{i=1}^n a_i\|^2 \leq n\sum_{i=1}^n \|a_i\|^2
\]
\label{fact:variance}
\end{fact}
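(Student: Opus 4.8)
The statement is the elementary relation between the norm of a sum and the sum of individual squared norms (a ``variance''-type bound), and I expect no substantial obstacle here; the plan is to derive it in two short steps from the triangle inequality and the Cauchy--Schwarz inequality applied to scalar sequences.

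First I would bound the left-hand side by the sum of the individual norms using the triangle inequality,
\[
\Big\|\sum_{i=1}^n a_i\Big\| \leq \sum_{i=1}^n \|a_i\|.
\]
Next I would regard the right-hand side as the inner product of the all-ones vector $\vone = (1,\dots,1)$ with the nonnegative vector $(\|a_1\|,\dots,\|a_n\|)$ and apply Cauchy--Schwarz,
\[
\sum_{i=1}^n 1\cdot\|a_i\| \leq \Big(\sum_{i=1}^n 1^2\Big)^{1/2}\Big(\sum_{i=1}^n \|a_i\|^2\Big)^{1/2} = \sqrt{n}\,\Big(\sum_{i=1}^n \|a_i\|^2\Big)^{1/2}.
\]
Squaring the resulting chained inequality then yields the claim directly.

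I would mention two equivalent routes as alternatives. One is to invoke convexity of the map $\vx\mapsto\|\vx\|^2$ together with Jensen's inequality, giving $\|\tfrac1n\sum_i a_i\|^2 \leq \tfrac1n\sum_i\|a_i\|^2$, and then multiply through by $n^2$. A second is to expand $\|\sum_i a_i\|^2 = \sum_{i,j}\langle a_i,a_j\rangle$ and bound each cross term by Young's inequality $\langle a_i,a_j\rangle \leq \tfrac12(\|a_i\|^2+\|a_j\|^2)$, after which the double sum collapses to $n\sum_i\|a_i\|^2$. All three arguments are routine. Since the inequality is tight (equality holds exactly when all the $a_i$ coincide), there is no subtlety to overcome, and the only genuine ``choice'' is which standard inequality to cite; I would use the triangle/Cauchy--Schwarz combination for brevity.
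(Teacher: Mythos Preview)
Your proof is correct; all three routes you outline (triangle inequality plus Cauchy--Schwarz, Jensen on $\|\cdot\|^2$, or expanding the inner product and applying Young's inequality) are valid and standard. The paper itself does not prove this statement---it is recorded as a ``Fact'' without proof and simply invoked later---so there is nothing to compare against beyond noting that your argument supplies the omitted justification.
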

\begin{theorem}\label{Non_convex}
Suppose that $f(\vx)$ is an $L-$ weakly convex function. Then for any fixed step size $\alpha >0$, the update rule from (\ref{bdf}),
given $\xi_i$, will have:
\begin{align*}
    \min_{0\leq s \leq k } \Vert \nabla f(\vx^{k+\tau+1}) \Vert  = O(\frac{1}{\sqrt{k}})
\end{align*}
if $\alpha < \frac{2-2(\tau-1)\delta}{L}$ and $\delta < 
\frac{1}{\tau-1}$ where 
\[
\delta = (\tau-1) \sum_{j=1}^{\tau-1} \left  (\sum_{i=1}^{j}(\tau-i )\xi_i^2 \right).
\]
\end{theorem}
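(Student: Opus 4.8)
The plan is to prove a sublinear rate on the minimum gradient norm by deriving a one-step approximate-descent inequality from the (\ref{bdf}) update and then summing it over $k$. Since $f$ is only $L$-weakly convex, neither the iterates nor the objective values can be expected to converge, so the target is the standard stationarity guarantee $\sum_{k}\|\nabla f(\vx^{k+\tau+1})\|^2<\infty$. Once a bound of the form $\sum_{s=0}^{k}\|\nabla f(\vx^{s+\tau+1})\|^2 \le C_1/\alpha + C_2\delta/\alpha^2$ is in hand, the claim follows immediately by bounding the minimum over $s$ by the average, giving $\min_{0\le s\le k}\|\nabla f\|^2 \le \frac{C_1}{\alpha k}+\frac{C_2\delta}{\alpha^2 k}$ and hence $\min\|\nabla f\|=O(1/\sqrt k)$, matching the general rate \eqref{eq:bdmrate-general} of Theorem \ref{th:exactbdm}.

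First I would rewrite the update as $\vx^{k+\tau+1}=\hat{\vx}^k-\alpha\nabla f(\vx^{k+\tau+1})$ with $\hat{\vx}^k:=\sum_{i=1}^\tau \xi_i\vx^{k+i}$, so that $\hat{\vx}^k-\vx^{k+\tau+1}=\alpha\nabla f(\vx^{k+\tau+1})$. Applying weak convexity between the base point $\vx^{k+\tau+1}$ and $\hat{\vx}^k$ converts the inner product into $\alpha\|\nabla f(\vx^{k+\tau+1})\|^2$ and the penalty into $\frac{L\alpha^2}{2}\|\nabla f(\vx^{k+\tau+1})\|^2$, yielding $f(\vx^{k+\tau+1})+\alpha(1-\tfrac{L\alpha}{2})\|\nabla f(\vx^{k+\tau+1})\|^2 \le f(\hat{\vx}^k)$. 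Then I bound $f(\hat{\vx}^k)$ by the generalized Jensen inequality for the convex function $f+\tfrac{L}{2}\|\cdot\|^2$ (this is the step that needs $\xi_i\ge 0$ and $\sum_i\xi_i=1$), producing
\[
f(\vx^{k+\tau+1}) + \alpha\Big(1-\tfrac{L\alpha}{2}\Big)\|\nabla f(\vx^{k+\tau+1})\|^2 \le \sum_{i=1}^\tau \xi_i f(\vx^{k+i}) + \tfrac{L}{2}\sum_{i=1}^\tau \xi_i\|\vx^{k+i}-\hat{\vx}^k\|^2.
\]
The factor $1-\tfrac{L\alpha}{2}$ already explains the $\tau=1$ threshold $\alpha<2/L$ ($\delta=0$); the final sum is the multistep \emph{drift} that must be controlled to recover the $\delta$-dependence.

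Next I would expand each drift term by writing $\vx^{k+i}-\hat{\vx}^k=\sum_{j}\xi_j(\vx^{k+i}-\vx^{k+j})$, telescoping each $\vx^{k+i}-\vx^{k+j}$ into consecutive differences $\vx^{m+1}-\vx^m$, and bounding the squared norms with Fact \ref{fact:variance}. The count of difference terms contributed by index $i$ is $\tau-i$, and squaring produces weights $(\tau-i)\xi_i^2$; summing these over the nested ranges $j=1,\dots,\tau-1$ is precisely how $\delta=(\tau-1)\sum_{j=1}^{\tau-1}\big(\sum_{i=1}^{j}(\tau-i)\xi_i^2\big)$ is assembled. Summing the resulting consecutive-difference bound over $k$ then telescopes: interior differences feed back through the prox-step identity $\vx^{m+1}-\hat{\vx}^{m-\tau}=-\alpha\nabla f(\vx^{m+1})$ into the left-hand gradient-norm sum with an $\alpha^2$ factor, while only the $\tau$ initial differences (which have no predecessor) survive, forming $C_2=2\sum_{s=0}^{\tau-1}\|\vx^{s+1}-\vx^s\|^2$. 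Finally, the objective terms telescope because $\sum_i\xi_i=1$ and $f(\vx^m)\ge f(\vx^*)$, leaving a boundary contribution bounded by $C_1=f(\vx^\tau)-f(\vx^*)$; moving the absorbed drift to the left reduces the effective coefficient of $\sum_k\|\nabla f\|^2$, and keeping it positive is exactly what forces $\alpha<\frac{2-2(\tau-1)\delta}{L}$ (with $\delta<\frac{1}{\tau-1}$ so this window is nonempty).

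I expect the drift accounting to be the main obstacle. The delicate parts are correctly expanding the multistep differences, tracking how the $(\tau-i)\xi_i^2$ weights aggregate into the specific combinatorial constant $\delta$, and — most subtly — showing that summing the drift over all $k$ telescopes so that only the $\tau$ initial differences persist as the finite constant $C_2$ while the bulk is reabsorbed into the left-hand gradient-norm sum, producing the stated step-size threshold. A secondary subtlety is the sign of the $\xi_i$: the generalized Jensen step requires $\xi_i\ge 0$, so for BDF coefficients that change sign one must either restrict to this regime or pass to $|\xi_i|$ as in the strongly convex variant.
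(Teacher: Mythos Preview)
Your overall skeleton (one-step descent inequality, reduce the multistep drift to consecutive differences via Fact~\ref{fact:variance}, sum) is right, but your decomposition diverges from the paper's in a way that leaves a real gap. The paper does \emph{not} compare $f(\vx^{k+\tau+1})$ with $f(\hat\vx^k)$ and then Jensen. It applies weak convexity between the two \emph{consecutive} iterates $\vx^{k+\tau}$ and $\vx^{k+\tau+1}$, substitutes $\nabla f(\vx^{k+\tau+1})=\tfrac{1}{\alpha}(\hat\vx^k-\vx^{k+\tau+1})$, and expands the inner product with the polarization identity $2\langle a,b\rangle=\|a\|^2+\|b\|^2-\|a-b\|^2$. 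This produces, in addition to $\alpha\|\nabla f(\vx^{k+\tau+1})\|^2$, a \emph{positive} term $(\tfrac{1}{\alpha}-\tfrac{L}{2})\|\vx^{k+\tau+1}-\vx^{k+\tau}\|^2$ and the drift $-\tfrac{1}{\alpha}\|\sum_{i=1}^{\tau-1}\xi_i(\vx^{k+i}-\vx^{k+\tau})\|^2$. After the Fact~\ref{fact:variance} bound and summation the drift is $\le \tfrac{\delta}{\alpha}\sum_s\|\vx^{s+1}-\vx^s\|^2$, and it is absorbed \emph{into that positive consecutive-difference term}, not into the gradient sum: the step-size restriction is precisely the condition $\tfrac{1}{\alpha}-\tfrac{L}{2}-\tfrac{\delta}{\alpha}\ge 0$. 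The coefficient on $\sum\|\nabla f\|^2$ stays $\alpha$ throughout, and the telescoping of $f$ is the trivial one, $f(\vx^{k+\tau})-f(\vx^{k+\tau+1})$, with no Jensen and no sign hypothesis on the $\xi_i$ needed at that step.

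Your route forfeits exactly that positive $\|\vx^{k+\tau+1}-\vx^{k+\tau}\|^2$ term, so you have nothing on the left to absorb the drift except the gradient sum. Your proposed mechanism---``interior differences feed back through the prox-step identity $\vx^{m+1}-\hat\vx^{m-\tau}=-\alpha\nabla f(\vx^{m+1})$''---does not work as stated: that identity links $\vx^{m+1}$ to the \emph{centroid} $\hat\vx^{m-\tau}$, not to the previous iterate $\vx^m$. For $\tau>1$ one has $\vx^{m+1}-\vx^m=-\alpha\nabla f(\vx^{m+1})+\sum_{i=1}^{\tau-1}\xi_i(\vx^{m-\tau+i}-\vx^m)$, so any attempt to convert $\|\vx^{m+1}-\vx^m\|^2$ into $\alpha^2\|\nabla f\|^2$ re-introduces a drift term of the very kind you are trying to control; the argument becomes circular unless you set up and solve an auxiliary recursion for $\sum_m\|\vx^{m+1}-\vx^m\|^2$, which you have not done. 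Two smaller mismatches confirm the divergence: your drift is $\tfrac{L}{2}\sum_i\xi_i\|\vx^{k+i}-\hat\vx^k\|^2$ (a variance), not the paper's $\tfrac{1}{\alpha}\|\sum_{i<\tau}\xi_i(\vx^{k+i}-\vx^{k+\tau})\|^2$, so the Fact~\ref{fact:variance} expansion does not assemble the same $\delta$ (already for $\tau=2$ yours gives $\xi_1\xi_2$, the paper's gives $\xi_1^2$); and your objective telescoping leaves boundary terms $\sum_{j<\tau}\big(\sum_{i\le j}\xi_i\big)f(\vx^j)$, not just $f(\vx^\tau)-f(\vx^*)$. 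The $O(1/\sqrt k)$ conclusion may still be salvageable along your lines with substantially more work, but neither the stated $\delta$ nor the stated threshold on $\alpha$ would emerge from it.
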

\begin{proof}
By $L$-smoothness, we will have:
\begin{align*}
    f(\vx^{k+\tau}) &\geq f(\vx^{k+\tau+1}) + \langle\nabla f(\vx^{k+\tau+1}),\vx^{k+\tau}-\vx^{k+\tau+1}\rangle - \frac{L}{2}      
     \Vert \vx^{k+\tau+1}-\vx^{k+\tau} \Vert^2 \\
     &= f(\vx^{k+\tau+1}) + \frac{1}{\alpha}\langle \sum_{i=1}^{\tau}\xi_i \vx^{k+i} -   \vx^{k+\tau+1},\vx^{k+\tau}-\vx^{k+\tau+1}\rangle - \frac{L}{2}      
     \Vert \vx^{k+\tau+1}-\vx^{k+\tau} \Vert^2 \\
     &\overset{(*)}{ =} 
    f(\vx^{k+\tau+1}) + \frac{1}{\alpha}\underbrace{\Vert \vx^{k+\tau+1} - \sum_{i=1}^{\tau}\xi_i \vx^{k+i}\Vert^2}_{= \alpha \Vert \nabla f(\vx^{(k+\tau+1)}) \Vert^2} + \left(\frac{1}{\alpha}-\frac{L}{2}\right)\Vert \vx^{k+\tau+1} - \vx^{k+\tau} \Vert^2 - \frac{1}{\alpha}\Vert \sum_{i=1}^{\tau-1} \xi_i(\vx^{k+i} -\vx^{k+\tau}) \Vert^2  ) 
\end{align*}
where (*) is $2a^Tb =\|a\|^2+\|b\|^2-\|a-b\|^2$.
Telescoping, 
\[
f(\vx^\tau) - f(\vx^*) \geq \alpha\sum_{k=0}^T \Vert\nabla f(\vx^{(k+\tau+1)})\Vert^2 + \left(\frac{1}{\alpha}-\frac{L}{2}\right)\sum_{k=0}^T \Vert \vx^{k+\tau+1} - \vx^{k+\tau} \Vert^2 - \frac{1}{\alpha}\sum_{k=0}^T \Vert \sum_{i=1}^{\tau-1} \xi_i(\vx^{k+i} -\vx^{k+\tau}) \Vert^2  
\]

Now from Fact \ref{fact:variance},
notice that:
\begin{align*}
    \Vert \sum_{i=1}^{\tau-1} \xi_i(\vx^{k+i} -\vx^{k+\tau}) \Vert^2 &\leq(\tau-1) \sum_{i=1}^{\tau-1}\xi_i^2\Vert \vx^{k+i}-\vx^{k+\tau} \Vert^2 \\
    &\overset{(*)}{\leq} (\tau-1) \sum_{i=1}^{\tau-1}\xi_i^2\|\sum_{j=i}^{\tau-1}\vx^{(k+j)} - \vx^{(k+j-1)}\|^2 \\
    &\overset{\text{Fact \ref{fact:variance}}}{\leq} 
     (\tau-1) \sum_{i=1}^{\tau-1}(\tau-i )\sum_{j=i}^{\tau-1}\xi_i^2\Vert \vx^{k+j}-\vx^{k+j+1} \Vert^2 \\
    &= (\tau-1) \sum_{j=1}^{\tau-1}  \left  (\sum_{i=1}^{j} (\tau-i)\xi_i^2 \right)\Vert \vx^{k+j}-\vx^{k+j+1} \Vert^2 \\
\end{align*}
where (*) is from reverse telescoping
\[
\vx^{(k+i)} - \vx^{(k+\tau)} = \sum_{j=i}^{\tau-1}(\vx^{(k+j)} - \vx^{(k+j-1)})
\]
Hence for $c_j =  \sum_{i=1}^{j} (\tau-i)\xi_i^2 $ we have:
\begin{align*}
 \sum_{s=0}^{k} \Vert \sum_{i=1}^{\tau-1} \xi_i(\vx^{s+i} -\vx^{s+\tau}) \Vert^2 
 &\leq (\tau-1) \sum_{r=0}^{k}\sum_{j=1}^{\tau-1} c_j \Vert \vx^{r+j}-\vx^{r+j+1} \Vert^2  \\ 
 &= (\tau-1)\sum_{j=1}^{\tau-1} c_j  \sum_{r=0}^{k}\Vert \vx^{r+j}-\vx^{r+j+1} \Vert^2  \\ 
 &\overset{s=r+j}{=} (\tau-1)\sum_{j=1}^{\tau-1} c_j  \sum_{s=j}^{j+k}\Vert \vx^{s}-\vx^{s+1} \Vert^2  \\ 
 & \leq \underbrace{(\tau-1)    \sum_{j=1}^{\tau-1}  c_j }_{=\delta}\sum_{s=0}^{k+\tau-1} \Vert \vx^{s+1}-\vx^{s} \Vert^2\\ 
\end{align*}
As a result, we will have:
\[
f(\vx^\tau) - f(\vx^*) \geq \alpha\sum_{k=0}^T \Vert\nabla f(\vx^{(k+\tau+1)})\Vert^2 + \left(\frac{1}{\alpha}-\frac{L}{2}-\frac{\delta}{\alpha}\right)\sum_{k=0}^T \Vert \vx^{k+\tau+1} - \vx^{k+\tau} \Vert^2 - \frac{\delta}{\alpha}\sum_{s=0}^{\tau-1} \Vert \vx^{s+1}-\vx^{s} \Vert^2
\]
Now suppose we pick $\alpha,\delta$ such that $\frac{1}{\alpha}-\frac{L}{2}- \frac{\delta}{\alpha} \geq 0$ (e.g. $\alpha < \frac{2-2(\tau-1)\delta}{L}$) then
\[
\frac{1}{T}\sum_{k=0}^T \Vert\nabla f(\vx^{(k+\tau+1)})\Vert^2   \leq \frac{1}{T\alpha}(f(\vx^\tau) - f(\vx^*)) + \frac{\delta}{ T\alpha^2}\sum_{s=0}^{\tau-1} \Vert \vx^{s+1}-\vx^{s} \Vert^2  = O(1/T).
\]
and we have the desired result.
\end{proof}
\begin{theorem}
Suppose that $f(\vx)$ is an $L$-weakly convex function. Then for any fixed step size $\alpha >0$,suppose that the exact solution to
$\tau$ multi-step proximal points methods are 
\begin{align}
\tilde{\vx}^{k+\tau+1} = \sum_{i=1}^{\tau}\xi_i \vx^{k+i} - \alpha \nabla f(\tilde{\vx}^{k+\tau+1})
\end{align}
where  $\sum_{i=1}^n \xi_i = 1$ and $\xi \geq 0 $. Suppose that an approximate solution $\vx^{k+\tau +1}$ of $\tilde{\vx}^{k+\tau+1}$ satisfies the following contraction with respect to $\vx^{k+\tau}$:
\begin{equation}
    \Vert \vx^{k+\tau +1} - \tilde{\vx}^{k+\tau+1} \Vert  \leq \gamma     \Vert  \vx^{k+\tau+1} - \vx^{k+\tau} \Vert \label{eq:approx_contract_assp-2}
\end{equation}
Then, we will have:
\begin{align*}
    \min_{0\leq s \leq k } \Vert \nabla f(\vx^{k+\tau+1}) \Vert  = O(\frac{1}{\sqrt{k}})
\end{align*}
if $\alpha < \min ({\frac{2-2(\tau-1)\delta- 16\gamma^2}{L},\frac{1}{L}})$ and $\delta < 
\frac{1}{2\tau-2}$ where 
\[
\delta = (\tau-1) \sum_{j=1}^{\tau-1} \left  (\sum_{i=1}^{j}(\tau-1-i )\xi_i^2 \right).
\]
\end{theorem}
\begin{proof}
By $L$-smoothness, we will have:
\begin{align*}
    f(\vx^{k+\tau}) &\geq f(\vx^{k+\tau+1}) +  \frac{1}{\alpha}\langle\ \alpha \nabla f(\vx^{k+\tau+1}),\vx^{k+\tau}-\vx^{k+\tau+1}\rangle - \frac{L}{2}      
     \Vert \vx^{k+\tau+1}-\vx^{k+\tau} \Vert^2  \\
     &\overset{(*)}{ =} 
    f(\vx^{k+\tau+1}) + \alpha \Vert \nabla f(\vx^{(k+\tau+1)}) \Vert^2 + 
     \left(\frac{1}{\alpha}-\frac{L}{2}\right)\Vert \vx^{k+\tau+1}  - \vx^{k+\tau} \Vert^2  \\ & - \frac{1}{\alpha}\Vert \vx^{k+\tau} -\vx^{k+\tau+1} - \alpha \nabla f(\vx^{k+\tau+1}) \Vert^2  ) 
     \\
     &=  f(\vx^{k+\tau+1}) + \alpha \Vert \nabla f(\vx^{(k+\tau+1)}) \Vert^2 + 
     \left(\frac{1}{\alpha}-\frac{L}{2}\right)\Vert \vx^{k+\tau+1} - \vx^{k+\tau} \Vert^2 \\&  - \frac{1}{\alpha}\Vert \vx^{k+\tau} -\vx^{k+\tau+1} - \alpha \nabla f(\vx^{k+\tau+1}) + \tilde{\vx}^{k+\tau+1} - \sum_{i=1}^{\tau}\xi_i \vx^{k+i} + \alpha   \nabla f(\tilde{\vx}^{k+\tau+1}) \Vert^2  \\ 
     & \geq   f(\vx^{k+\tau+1}) + \alpha \Vert \nabla f(\vx^{(k+\tau+1)}) \Vert^2 + 
     \left(\frac{1}{\alpha}-\frac{L}{2}\right)\Vert \vx^{k+\tau+1} - \vx^{k+\tau} \Vert^2 \\&  - \frac{2}{\alpha}\Vert \sum_{i=1}^{\tau-1} \xi_i(\vx^{k+i} -\vx^{k+\tau}) \Vert^2 - \frac{2}{\alpha} \Vert \vx^{k+\tau+1} + \alpha \nabla f(\vx^{k+\tau+1}) - \tilde{\vx}^{k+\tau+1}  - \alpha   \nabla f(\tilde{\vx}^{k+\tau+1}) \Vert^2 \\
     & \geq  f(\vx^{k+\tau+1}) + \alpha \Vert \nabla f(\vx^{(k+\tau+1)}) \Vert^2 + 
     \left(\frac{1}{\alpha}-\frac{L}{2}\right)\Vert \vx^{k+\tau+1} - \vx^{k+\tau} \Vert^2 \\&  - \frac{2}{\alpha}\Vert \sum_{i=1}^{\tau-1} \xi_i(\vx^{k+i} -\vx^{k+\tau}) \Vert^2 - \frac{2(1+\alpha L)^2}{\alpha} \Vert \vx^{k+\tau+1}  - \tilde{\vx}^{k+\tau+1}  \Vert^2 \\
     &  \geq  f(\vx^{k+\tau+1}) + \alpha \Vert \nabla f(\vx^{(k+\tau+1)}) \Vert^2 + 
     \left(\frac{1}{\alpha}-\frac{L}{2}\right)\Vert \vx^{k+\tau+1} - \vx^{k+\tau} \Vert^2 \\&  - \frac{2}{\alpha}\Vert \sum_{i=1}^{\tau-1} \xi_i(\vx^{k+i} -\vx^{k+\tau}) \Vert^2 - \frac{2\gamma^2 (1+\alpha L)^2}{\alpha} \Vert \vx^{k+\tau+1}  - \vx^{k+\tau}  \Vert^2
\end{align*}
Now we repeat the step from \ref{Non_convex}, we will have:
\[
f(\vx^\tau) - f(\vx^*) \geq \alpha \sum_{k=0}^T \Vert\nabla f(\vx^{(k+\tau+1)})\Vert^2 + \left(\frac{1}{\alpha}-\frac{L}{2} - \frac{2\gamma^2 (1+\alpha L)^2}{\alpha}-\frac{2\delta}{\alpha}\right)\sum_{k=0}^T \Vert \vx^{k+\tau+1} - \vx^{k+\tau} \Vert^2 - \frac{2\delta}{\alpha}\sum_{s=0}^{\tau-1} \Vert \vx^{s+1}-\vx^{s} \Vert^2
\]
Hence, we prove the desired result.
\end{proof}
\section{ADDITIONAL EXPERIMENT}


  Figure \ref{fig:numerical5} shows the performance of various BDF schemes over our applications,f or $m = 5$ inner iterations. Note that the increased number of inner iterations do not have discernable effect; the results look very similar to that of Figure \ref{fig:numerical} ($m=1$). 
\begin{figure}[!ht]
    \centering
         \begin{subfigure}[b]{0.3\textwidth}
 \includegraphics[width=\linewidth, trim={2ex .5ex 2ex 1ex}, clip]{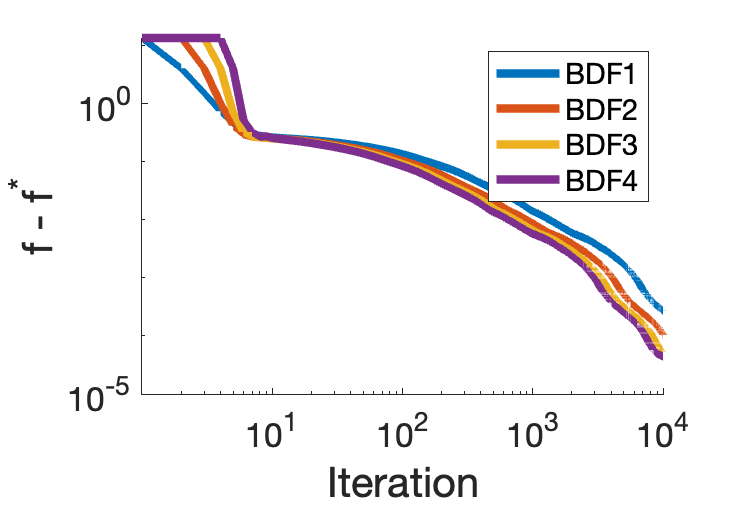}
\end{subfigure}
         \begin{subfigure}[b]{0.3\textwidth}
    \includegraphics[width=\linewidth, trim={2ex .5ex 2ex 1ex}, clip]{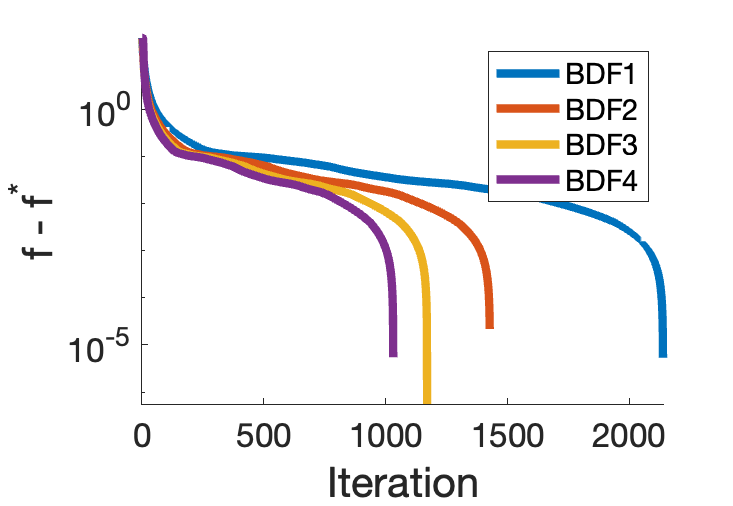}
\end{subfigure}
         \begin{subfigure}[b]{0.3\textwidth}    
    \includegraphics[width=\linewidth, trim={2ex .5ex 2ex 1ex}, clip]{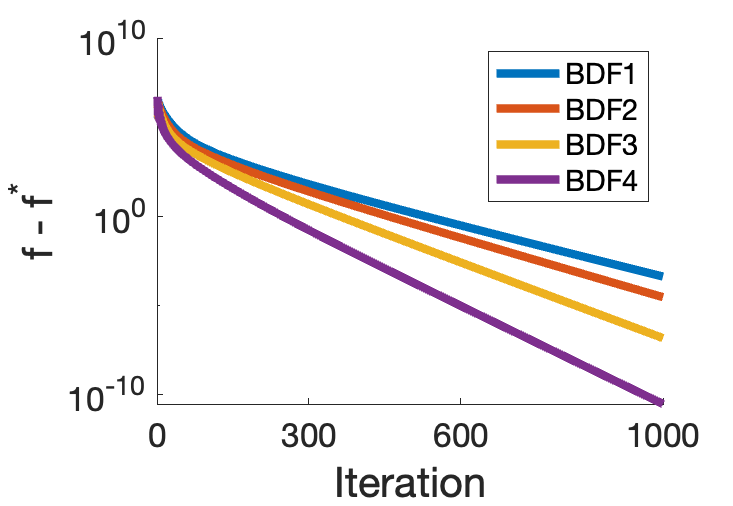}
\end{subfigure}
    \caption{Comparsion of different BDF schemes for   proximal gradient with $\ell_1$ penalty \textbf{(left)}, 
     proximal gradient with LSP (nonconvex) penalty \textbf{(middle)}, and alternating minimizations for matrix factorization  \textbf{(right)}
     In all cases, we use $m = 5$ inner iterations.
     }
     \label{fig:numerical5}
\end{figure}
\end{document}